\numberwithin{equation}{section}
\newtheorem{theo}{Theorem}[section]
\newtheorem{defi}[theo]{Definition}
\newtheorem{coro}[theo]{Corollary}
\newtheorem{lemm}[theo]{Lemma}
\newtheorem{rema}[theo]{Remark}
\newcommand{\F}{{\mathcal F}}
\newcommand{\cc}{\mathbb{C}}
\newcommand{\z}{\mathbb{Z}}
\begin{document}

\title[Biderivations and post-Lie algebra]{Super-biderivations and post-Lie superalgebras on some Lie superalgebras}

\author{ Munayim Dilxat}
\address{College of Mathematics and System Sciences, Xinjiang University, Urumqi 830046, Xinjiang, China}
\email{munayim@stu.xju.edu.cn}

\author{Shoulan Gao}
\address{Department of Mathematics, Huzhou University, Zhejiang Huzhou, 313000, China}
\email{gaoshoulan@zjhu.edu.cn}

\author{Dong Liu}
\address{Department of Mathematics, Huzhou University, Zhejiang Huzhou, 313000, China}
\email{liudong@zjhu.edu.cn}

\date{}
\maketitle

\begin{abstract}In this paper, all symmetric super-biderivations of some Lie superalgebras are determined.
  As an application, commutative post-Lie superalgebra structures on these Lie superalgebras are also obtained.
\end{abstract}

{\bf {\scriptsize Key Words:}} {\scriptsize  Lie superalgebra, super-biderivation, post-Lie superalgebra.}

{\bf \scriptsize MR(2000) Subject Classification} {\scriptsize 17B05, 17B40, 17B68, 17B70}     

\section{Introduction}

Lie superalgebras as a generalization of Lie algebras came from supersymmetry in mathematical physics. The theory of Lie superalgebras plays key roles in modern mathematics and physics. Derivations and generalized derivations are very important subjects in the research of both algebras and their generalizations. In recent years,  many authors put so much effort into the problems of biderivations \cite{DB,B3,BZ,C,C2, XDC,TXM,TL,XTB,WY,WYC,CCC,ZFL}. The concept of biderivations was introduced and studied in \cite{B3}, and the author showed that all biderivations on commutative prime rings are inner biderivations  and then determined the biderivations of semiprime rings. The skew-symmetric biderivations of Lie algebras and Lie superalgebras have been sufficiently studied. In \cite{BZ, TMC}, the authors proved that all skew-symmetric biderivations on any perfect and centerless Lie algebras or Lie superalgebras are inner super-biderivations. With this result, all skew-symmetric biderivations on many Lie (super)algebras, which are related to the Virasoro algebra, can be determined (see \cite{BZ} for details).

It is well known that any biderivation can be decomposed into a skew-symmetric biderivation and a symmetric biderivation, and the later can determined commutative post-Lie algebra structures, which are connected with the homology of partition posets and Koszul operads \cite{bvh}.
However, there is no any sufficient tool to determine symmetric biderivations on some Lie algebras and Lie superalgebras.
Motivated by \cite{F} in calculating cohomology groups of the Virasoro algebra, we develop a general method to determine all symmetric biderivations on some Lie algebras and Lie superalgebras related to the Virasoro algebra in this paper.
We first study the biderivations from the Virasoro algebra to its density modules and then extend it to the super case. Based on these results, we determine all symmetric biderivations of some Lie superalgeras. As an application, commutative post-Lie algebra structures on these Lie superalgebras are obtained.

Throughout  this paper, we denote by $\mathbb{C}, \mathbb{Z}$ and $\mathbb{Z}^*$ the set of complex numbers,  the set of integers and nonzero integers,  respectively. All vector spaces are based on $ \mathbb{C}$.

\section{Preliminaries}

Let $L$ be a Lie superalgebra. We use the notation $|x| \ (\overline{0} $ or $ \overline{1}) $ to denote the $\mathbb{Z}_{2}$-degree of a homogeneous element $x\in L$. Elements in $L_{\overline{0}}$ or $L_{\overline{1}}$ are called homogeneous
   whenever $|x|$ is written.

\begin{defi} Let $L$ be a Lie superalgebra and $M$ be an $L$-module. A homogeneous bilinear map $\varphi: L \times L \rightarrow M$ is called a super-biderivation from $L$ to $M$ if it satisfies the following  equations:
 \begin{eqnarray}
 && \varphi([x, y], z)= (-1)^{|\varphi||x|}x\cdot\varphi(y, z)-(-1)^{|y|(|\varphi|+|x|)}y\cdot\varphi(x, z), \label{2.111}\\
&&\varphi(x, [y, z])=(-1)^{(|\varphi|+|x|)|y|} y\cdot\varphi(x, z)-(-1)^{|z|(|\varphi|+|x|+|y|)} z\cdot\varphi(x, y)  \label{1.10}
 \end{eqnarray}
   for all  $x, y, z \in L$.
\end{defi}

\begin{rema}\label{center}Clearly, if $M$ is trivial $L$-module and then for any bidiervation $\varphi: L \times L \rightarrow M$ we have $\varphi(L, [L, L])=0$ by (\ref{1.10}).
\end{rema}

Denote by ${\rm BDer}_{\gamma}(L, M)$ the set of all super-biderivations of homogenous $\gamma$ from $L$ to $M$, and ${\rm BDer}(L, M)$ the set of all super-biderivations from $L$ to $M$.
 Obviously, ${\rm BDer}(L, M)={\rm BDer}_{\overline{0}}(L, M)+{\rm BDer}_{\overline{1}}(L, M)$.
 If $M=L$, set ${\rm BDer}(L):={\rm BDer}(L, L)$. In this case a super-biderivation from $L$ to $L$ is also called a super-biderivation of $L$.

Let $L$ be a Lie superalgebra, $M$ be an $L$-module. We define Cent$_L(M)$ as follows:
A linear map $\gamma:L\to M$ belongs to Cent$_L(M)$ if
$$\gamma([x,y]) =x\cdot\gamma(y),\quad   \forall x, y\in L.$$

If  $M=L$ and $x\cdot y=[x,y]$, this is the usual centroid of $L$.

We call a super-biderivation $\varphi$  is skew-symmetry if
\begin{align}\label{a1}
 \varphi(x, y)= -(-1)^{|x||y|}\varphi(y,x), \ \forall  x,y,z \in L;
\end{align}
symmetry if
\begin{align}\label{b1}
 \varphi(x, y)= (-1)^{|x||y|}\varphi(y,x), \ \forall  x,y,z \in L.
 \end{align}
Clearly, every super-biderivation involves skew-symmetric and symmetric parts.

For skew-symmetric biderivations, some general results were first given by \cite{BZ} for Lie algebras and extended to Lie superalgebras in \cite{TMC} recently:
\begin{theo}(\cite{BZ, TMC})
Let $L$ be a perfect Lie superalgebra  and $M$ be an  $L$-module such that $Z_M(L):=\{u\in M\mid x\cdot u=0, \forall x\in L\}=\{0\}$.
Then every skew-symmetric biderivation $\delta:L\times L\to M$  is of the form
$\delta(x,y) = \gamma([x,y])$, where $\gamma\in {\rm Cent}_L(M)$.
\end{theo}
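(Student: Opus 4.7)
The plan is to construct a linear map $\gamma\colon L \to M$ in $\mathrm{Cent}_L(M)$ such that $\delta(x, y) = \gamma([x, y])$ for all $x, y \in L$. Since $L$ is perfect, every element of $L$ is a sum of commutators, so such a $\gamma$ is uniquely determined by the requirement $\gamma([x, y]) = \delta(x, y)$ extended linearly. I will therefore define $\gamma$ via this assignment on an arbitrary commutator decomposition and then verify both well-definedness and the centroid identity.

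For well-definedness, I first note that \eqref{2.111} asserts that $w \mapsto \delta(w, z)$ is a super-derivation $L \to M$ (for each $z \in L$), and dually \eqref{1.10} asserts the same for $w \mapsto \delta(z, w)$; the two are related by skew-symmetry \eqref{a1}. Now suppose $\sum_i [x_i, y_i] = 0$ in $L$ and set $T := \sum_i \delta(x_i, y_i) \in M$. The aim is to show $z \cdot T = 0$ for every $z \in L$, after which the hypothesis $Z_M(L) = 0$ forces $T = 0$. Applying \eqref{2.111} to each term yields $z \cdot \delta(x_i, y_i) = \delta([z, x_i], y_i) + (-1)^{|z||x_i|} x_i \cdot \delta(z, y_i)$. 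On the other hand, applying \eqref{2.111} to the vanishing sum $\delta(\sum_i [x_i, y_i], w) = 0$ produces the auxiliary identity $\sum_i x_i \cdot \delta(y_i, w) = \sum_i (-1)^{|x_i||y_i|} y_i \cdot \delta(x_i, w)$ for every $w \in L$. Combining these with the super-Jacobi identity $\sum_i [z, [x_i, y_i]] = 0$ and the skew-symmetry of $\delta$ should produce the needed cancellation and yield $z \cdot T = 0$.

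Once well-definedness is established, the centroid condition $\gamma([u, v]) = u \cdot \gamma(v)$ follows by writing $v = \sum_j [a_j, b_j]$, expanding $u \cdot \gamma(v) = \sum_j u \cdot \delta(a_j, b_j)$ via \eqref{2.111}, and identifying the result with $\delta(u, v) = \gamma([u, v])$ after another application of the auxiliary identity and the centerlessness hypothesis. The main obstacle is the well-definedness step: the super signs must be tracked carefully through the biderivation identities and the super-Jacobi identity, but the overall strategy parallels the non-super case in \cite{BZ} and its extension to Lie superalgebras in \cite{TMC}.
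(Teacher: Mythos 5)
First, a point of comparison: the paper does not prove this statement at all --- it is quoted from \cite{BZ, TMC} and used as a black box --- so there is no internal proof to measure your attempt against. Judged on its own, your outline has the right architecture (define $\gamma$ on a bracket decomposition using perfectness, kill the ambiguity using $Z_M(L)=\{0\}$, then verify the centroid identity), but the one step that carries all the content, well-definedness, is precisely the step you leave as ``should produce the needed cancellation,'' and the tools you list provably do not produce it. Concretely, in the ungraded case your identity from \eqref{2.111} gives $z\cdot T=\sum_i\delta([z,x_i],y_i)+\sum_i x_i\cdot\delta(z,y_i)$, and your auxiliary identity (from $\delta(\sum_i[x_i,y_i],w)=0$) merely rewrites the second sum as $\sum_i y_i\cdot\delta(z,x_i)$; the first sum $\sum_i\delta([z,x_i],y_i)$ is untouched, and every attempt to expand it by \eqref{2.111}, \eqref{1.10} or skew-symmetry returns you to $z\cdot T$ itself. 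Likewise the super-Jacobi identity only trades the problem for the vanishing sum $\sum_i[x_i,y_i]=0$ for the same problem for the vanishing sum $\sum_i([[z,x_i],y_i]+[x_i,[z,y_i]])=0$: one checks directly from \eqref{2.111} and \eqref{1.10} that $\sum_i\bigl(\delta([z,x_i],y_i)+\delta(x_i,[z,y_i])\bigr)=2\,z\cdot T$, so this route is circular.

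The missing ingredient is the four-variable compatibility identity obtained by expanding $\delta([x,y],[u,v])$ in the two possible orders (first argument first via \eqref{2.111} and then \eqref{1.10}, versus the reverse); subtracting the two expansions gives, in the ungraded case, $[x,u]\cdot\delta(y,v)-[x,v]\cdot\delta(y,u)-[y,u]\cdot\delta(x,v)+[y,v]\cdot\delta(x,u)=0$, and it is this relation, combined with skew-symmetry, that controls expressions of the form $[u,v]\cdot T$. This also exposes a second omission: the identity constrains $[u,v]\cdot T$, not $z\cdot T$ for arbitrary $z$, so perfectness must be used a second time to write the \emph{acting} element $z$ as a sum of brackets before $Z_M(L)=\{0\}$ can be invoked; your outline uses $L=[L,L]$ only to decompose the argument of $\gamma$. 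The same remark applies to your final paragraph: since $\gamma([u,v])=\delta(u,v)$ once $\gamma$ is well defined, the centroid identity $u\cdot\gamma(v)=\gamma([u,v])$ again collapses to a tautology if one only uses \eqref{2.111}, \eqref{1.10} and skew-symmetry, and the double-bracket identity is needed there as well. In addition, in the super setting all of this must be done for homogeneous $\delta$ with the $|\delta|$-dependent signs tracked, which your sketch elides. As written, the proposal is a plan rather than a proof.
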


However, there are no sufficient tools to calculate symmetric (super) biderivations over Lie (super)algebras. In this paper, we mainly study symmetric biderivations on some Lie algebras and symmetric super-biderivations on some Lie superalgebras related to the Virasoro algebra.

\begin{defi}
The Virasoro algebra $ {\rm Vir}={\rm span}_{\mathbb{C}}\{ L_{m}, C \mid m\in \mathbb{Z}\}$ is  the Lie algebra equipped with the following relations:
\begin{align}\label{1c}
[L_{m}, L_{n}]= (m-n)L_{m+n}+\frac{1}{12}\delta_{m+n, 0}(m^{3}-m)C,
\end{align}
\begin{align}\label{1K}
 [L_{m}, C]=0, \ \forall \ m, n \in \mathbb{Z}.
  \end{align}
\end{defi}

\begin{defi}
The so-called density module $\F_b$ over the Virasoro algebra for some $b\in\mathbb C$ is  $\F_{b}:=\sum_{i\in\mathbb Z}\mathbb Cv_i$ with
\begin{align*}L_mv_i=-(i+b m)v_{m+i}, \,  Cv_i=0, \quad \forall m, i\in\mathbb Z.
\end{align*}
\end{defi}

With the density module $\F_b$,  we can get a Lie algebra $W(0, b):={\rm Vir}\ltimes \F_b$  (see \cite{GS}). More details,
 for $b\in\mathbb C$, the Lie algebra $W(0, b)$ is  the Lie algebra $ \{ L_{m}, I_m,  C \mid m\in \mathbb{Z}\}$  equipped with $(\ref{1c})$ and the following relations:
       \begin{align}\label{1.2}
       &[L_{m}, I_{n}]= -(bm+n)I_{m+n},   \\
    \label{1.3}
    & [I_{m}, I_{n}]= 0, [x, C]= 0
         \end{align}
for any $m, n \in \mathbb{Z}, \  x\in W(0, b)$.

\section{Biderivations from the Virasoro algebra to its density modules}

In this section,  we mainly determine all symmetric biderivations on the Virasoro algebra {\rm Vir} to its density module $\F_b$.

Define a  linear map $\epsilon: {\rm Vir}\rightarrow\F_b$ as follows:
\begin{align}\label{Gdef}
\epsilon(L_{m})=v_{m}, \, \epsilon(C)=0, \quad\, \forall\,m\in\mathbb{Z}.
\end{align}
It is easy to check that $\epsilon\in {\rm Cent_{Vir}}(\F_b)$.
\begin{lemm}\label{cen}
 For the Virasoro algebra {\rm Vir} and its density module $\F_b$,
\begin{align}\label{3a}
{\rm Cent_{Vir}}(\F_b)=\begin{cases}
\mathbb C\epsilon, & if \  b=-1,\\
0,& otherwise.
\end{cases}
\end{align}
\end{lemm}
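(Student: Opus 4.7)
The plan is to parametrize a centroid element $\gamma \in {\rm Cent}_{\rm Vir}(\F_b)$ by $f_m := \gamma(L_m)$ for $m \in \mathbb{Z}$ together with $f_C := \gamma(C)$, and then squeeze the defining identity $\gamma([x, y]) = x \cdot \gamma(y)$ applied to well-chosen brackets in ${\rm Vir}$. In outline, I first pin down $f_C$, then express every $f_m$ in terms of $f_0$, and finally extract a quadratic constraint on $f_0$ that forces it into a single one-parameter family (and into zero outside the exceptional value $b = -1$).

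More concretely, applying the centroid identity to $[L_m, C] = 0$ gives $L_m \cdot f_C = 0$ for every $m$, so $f_C$ lies in the subspace of ${\rm Vir}$-invariants of $\F_b$, which a direct computation shows is $\{0\}$ unless $b = 0$, in which case it is $\mathbb{C} v_0$. Next, using $[L_m, L_0] = m L_m$ gives $m f_m = L_m \cdot f_0$ for all $m \neq 0$, so each $f_m$ is completely determined by $f_0$. The third input is $[L_m, L_m] = 0$, which forces $L_m \cdot f_m = 0$, hence $L_m^2 \cdot f_0 = 0$ for all $m \neq 0$. Writing $f_0 = \sum_j \alpha_j v_j$ and iterating $L_m v_j = -(j + bm) v_{m+j}$, this reads $\alpha_j (j + bm)(j + (b+1)m) = 0$ for every $j$ and every nonzero $m$. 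For fixed $j$ the polynomial $(j + bm)(j + (b+1)m)$ in $m$ has constant term $j^2$ and leading coefficient $b(b+1)$, so it vanishes identically precisely when $j = 0$ and $b \in \{0, -1\}$; in all other situations $\alpha_j = 0$.

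At this point the classification is almost immediate. For $b \neq 0, -1$ we obtain $f_0 = 0$, hence $f_m = 0$ for all $m$ and $f_C = 0$, so $\gamma = 0$. For $b = -1$ we get $f_0 = \alpha_0 v_0$ and $f_m = m^{-1} L_m (\alpha_0 v_0) = \alpha_0 v_m$ for every nonzero $m$, matching $\gamma = \alpha_0 \epsilon$ exactly. The exceptional value $b = 0$ needs one additional check: the candidate $f_0 = \alpha_0 v_0$ would force $f_m = 0$ for $m \neq 0$, and then applying $\gamma$ to the brackets $[L_1, L_{-1}] = 2 L_0$ and $[L_2, L_{-2}] = 4 L_0 + \tfrac{1}{2} C$ forces $\alpha_0 = 0$ and $f_C = 0$ in turn, so $\gamma = 0$ here as well. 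The main obstacle is the third step: one has to recognize that $L_m^2 f_0 = 0$ is the correct ``squared'' identity to exploit, and then cleanly analyse when the polynomial $(j + bm)(j + (b+1)m)$ in $m$ vanishes identically to pin down the exceptional split $b \in \{0, -1\}$.
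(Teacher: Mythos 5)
Your argument is correct, and it reaches the classification by a route that is organized quite differently from the paper's. The paper expands $\gamma(L_m)=\sum_{k}a_{m,k}v_{m+k}$, extracts the full two-index recursion $(m-n)a_{m+n,k}=(-bm-n-k)a_{n,k}$ from $\gamma([L_m,L_n])=L_m\cdot\gamma(L_n)$, and then specializes repeatedly ($m=0$ to kill $k\ne 0$, then $n=k=0$, then one more substitution to reach $b(b+1)a_{0,0}=0$). You instead use only three families of brackets: $[L_m,C]=0$ to place $\gamma(C)$ in the space of invariants, $[L_m,L_0]=mL_m$ to solve $f_m=m^{-1}L_m\cdot f_0$, and $[L_m,L_m]=0$ to obtain the single operator identity $L_m^2\cdot f_0=0$, from which everything is read off via the vanishing of the polynomial $b(b+1)m^2+j(2b+1)m+j^2$ in $m$. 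This compresses the paper's two separate specializations (support of $f_0$ concentrated at $j=0$, and the constraint $b(b+1)=0$) into one step, and it is more careful at the degenerate value $b=0$: there $L_m\cdot v_0=-bm\,v_m=0$, so the identity $\gamma([L_m,C])=L_m\cdot\gamma(C)$ gives no information (the paper's displayed computation $L_m\cdot c_0v_0=c_0mv_m$ silently drops the factor $b$), and your appeal to the genuinely central bracket $[L_2,L_{-2}]=4L_0+\frac{1}{2}C$, together with $[L_1,L_{-1}]=2L_0$, is exactly what is needed to kill both $\alpha_0$ and $f_C$ in that case. The one point left implicit on both sides is the reverse inclusion for $b=-1$, namely the direct check that $\epsilon([L_m,L_n])=L_m\cdot\epsilon(L_n)$, which the paper records as ``easy to check'' just before the lemma; it would be worth one line in your write-up as well, since your derivation only uses a proper subset of the centroid identities.
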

\begin{proof}
Let $\gamma\in {\rm Cent_{Vir}}\F_b$. Without loss of generality,  we can suppose that
$\gamma(L_{m})=\sum_{k\in\mathbb{Z}}a_{m,k}v_{m+k}$ and
$\gamma(C)=\sum_{k\in\mathbb{Z}}c_{k}v_{k}$  for some $a_{m, k}, c_{k}\in\mathbb C$.
 From $ \gamma([L_m, L_n])=L_{m}.\gamma(L_{n}) $, we get
 \begin{align}\label{3b}
  (m-n)a_{m+n,k}=(-bm-n-k)a_{n,k}.
   \end{align}
  Taking $m=0$ in  \eqref{3b}, we get $a_{n,k}=0$ if $k \neq 0$, and then
  $\gamma(L_{m})=a_{m,0}v_{m}$.
  Taking $n=k=0$ in  \eqref{3b}, we get $a_{m,0}=-ba_{0,0}$. By \eqref{3b},  we get
 $b(b+1)a_{0,0}=0$.
  Thus, we get $a_{m,0}=a_{0,0}\in \mathbb{C}$ if $b=-1$, and $a_{m,0}=0$ if $b\ne -1$ for any $m\in\mathbb Z$.

 Since $\gamma[L_{0}, C]=L_{0}\cdot \gamma(C)$, we have
$$0=L_{0}\cdot \gamma(C)=L_{0}\cdot\sum c_k v_{k}=\sum c_{k} L_{0}\cdot v_{k}=-\sum kc_{k} v_{k}.$$
Then $kc_{k}=0$, which forces $c_{k}=0$ for $k\neq 0$. Thus, we get
$\gamma(C)=c_{0}v_{0}$.
Moreover, by $\gamma[L_{m}, C]=L_{m}\cdot \gamma(C)$,
 we have
$$0=L_{m}\cdot c_{0}v_{0}=c_{0}mv_{m}.$$
So $c_{0}m=0$ for all $m\in\mathbb{Z}$, which suggests  $c_{0}=0$.  The lemma holds.
\end{proof}

With Theorem 2.3 in \cite{BZ} and Lemma \ref{cen}, we have the following result.

\begin{theo}\label{000}
 Every skew-symmetric biderivation $\delta\in {\rm BDer}({\rm Vir}, \F_{b})$ is as follows:
\begin{align}
&\delta(L_{m},L_{n})=\lambda\epsilon([L_m, L_n])=\begin{cases}
\lambda(m-n)v_{m+n}, & if \  b=-1,\\
0,& otherwise.
\end{cases}\\
&\delta(x,C)=0
\end{align}
for any $ m,n\in \mathbb{Z}, x\in{\rm Vir}, \lambda\in \mathbb{C }$.
\end{theo}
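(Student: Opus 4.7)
The plan is to combine the cited skew-symmetric biderivation result of \cite{BZ, TMC} (quoted just before the statement) with Lemma \ref{cen}. The first step is to check the two hypotheses of that theorem for $L = {\rm Vir}$ and $M = \F_b$: namely that ${\rm Vir}$ is perfect and that $Z_{\F_b}({\rm Vir}) = 0$. Perfection is routine: $L_0 = \tfrac{1}{2}[L_1, L_{-1}]$, $L_m = -\tfrac{1}{m}[L_0, L_m]$ for $m \neq 0$, and $C$ is recovered from $[L_2, L_{-2}] = 4L_0 + \tfrac{1}{2}C$. The vanishing $Z_{\F_b}({\rm Vir}) = 0$ is an $L_0$-weight argument: any $v = \sum c_i v_i$ killed by $L_0$ forces $c_i = 0$ for $i \neq 0$, and then $L_m v_0 = -bm\, v_m = 0$ forces $c_0 = 0$ whenever $b \neq 0$.

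With the hypotheses in hand, Theorem 2.3 produces some $\gamma \in {\rm Cent}_{\rm Vir}(\F_b)$ with $\delta(x,y) = \gamma([x,y])$. Lemma \ref{cen} pins $\gamma$ down to $\lambda\epsilon$ when $b = -1$ and to $0$ otherwise. Substituting \eqref{1c} and using $\epsilon(L_k) = v_k$, $\epsilon(C) = 0$ yields $\delta(L_m,L_n) = \lambda(m-n) v_{m+n}$ in the case $b = -1$ (the central cocycle term is absorbed by $\epsilon(C) = 0$), and $\delta \equiv 0$ otherwise. The equality $\delta(x, C) = \gamma([x,C]) = \gamma(0) = 0$ is then immediate from \eqref{1K}.

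The one genuinely delicate point is the corner case $b = 0$, where $Z_{\F_0}({\rm Vir}) = \mathbb{C}v_0 \neq 0$, so Theorem 2.3 does not literally apply. I would address this by passing to the quotient $\F_0 / \mathbb{C}v_0$, which is an irreducible ${\rm Vir}$-module satisfying $Z = 0$ (since $L_0$ acts diagonally with nonzero eigenvalues on all surviving basis vectors). Applying Theorem 2.3 and Lemma \ref{cen} there shows that $\delta$ lands in $\mathbb{C}v_0$. Then \eqref{2.111} together with the $L_0$-grading kills $\delta$ entirely: for fixed $L_n$, the map $L_m \mapsto \delta(L_m, L_n)$ is forced to be $L_0$-homogeneous of weight $-(m+n)$, while $v_0$ has weight $0$; this rules out $m+n \neq 0$, and the exceptional case $m+n = 0$ is then reduced to the previous one by inserting $[L_1, L_{-1}] = 2L_0$ and reapplying \eqref{2.111}. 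This final weight/degree bookkeeping is the step I expect to require the most care; the rest is a mechanical assembly of the cited tools.
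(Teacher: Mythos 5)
Your proposal follows the same route as the paper: invoke the quoted skew-symmetric biderivation theorem of \cite{BZ, TMC} to write $\delta(x,y)=\gamma([x,y])$ with $\gamma\in{\rm Cent}_{\rm Vir}(\F_b)$, then substitute Lemma \ref{cen} and the brackets \eqref{1c}, \eqref{1K}. The one substantive difference is that you actually check the hypotheses, and in doing so you catch a point the paper passes over in silence: for $b=0$ the vector $v_0$ is annihilated by all of ${\rm Vir}$, so $Z_{\F_0}({\rm Vir})=\mathbb{C}v_0\neq 0$ and the cited theorem does not literally apply. Your fix (pass to $\F_0/\mathbb{C}v_0$, conclude $\delta$ takes values in $\mathbb{C}v_0$, then kill it) is sound, with two small caveats: Lemma \ref{cen} as stated computes the centroid of $\F_b$, not of the quotient, so you need the (one-line, essentially identical) centroid computation for $\F_0/\mathbb{C}v_0$; and once $\delta$ is known to land in $\mathbb{C}v_0$, which ${\rm Vir}$ annihilates, identity \eqref{2.111} immediately gives $\delta([{\rm Vir},{\rm Vir}],{\rm Vir})=0$, so perfectness of ${\rm Vir}$ finishes the $b=0$ case without any weight bookkeeping or the $[L_1,L_{-1}]$ manoeuvre. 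In short: correct, same method as the paper, and your version is the more complete of the two.
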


Now we shall determine all biderivations (no skew-symmetry required) of the Virasoro algebra {\rm Vir} to $\F_{b}$, which is very helpful to do such researches for some Lie (super)algebras related to the Virasoro algebra.

  \begin{theo}\label{main}
Every symmetric biderivation $\delta\in {\rm BDer}({\rm Vir}, \F_{b})$ is as follows:
  \begin{align*}
  &  \delta(L_{m},L_{n})=\begin{cases}
\sum_{k\in \mathbb{Z}}\mu _{k}\upsilon_{m+n+k}, & if  \  b=0,\\
\sum_{k\in \mathbb{Z}}(m+n+k)\mu_{k}\upsilon_{m+n+k}, & if \ b=1,\\
0,& otherwise,
\end{cases}\\
&\delta(x, C)=\delta(C, x)=0
  \end{align*}
  for any $m,n\in \mathbb{Z}, x\in {\rm Vir}, \mu_{k} \in \mathbb{C}, k\in\mathbb Z$.
   \end{theo}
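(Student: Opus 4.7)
The plan is to write the values of any symmetric biderivation $\delta$ on pairs of basis vectors in the form
\[
\delta(L_m,L_n)=\sum_{k\in\mathbb Z}f_k(m,n)\,v_{m+n+k},
\]
turn \eqref{2.111} into functional equations for the scalar coefficients $f_k(m,n)$, and then show that the symmetry constraint collapses to a polynomial identity in $(m,p)$ whose vanishing forces $b\in\{0,1\}$.

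First I would dispose of the central element $C$. Taking $z=C$ in \eqref{1.10} and using $[y,C]=0$ together with $C\cdot\F_b=0$ yields $y\cdot\delta(x,C)=0$ for every $y\in{\rm Vir}$, so $\delta(x,C)\in{\rm Ann}_{{\rm Vir}}(\F_b)$. A direct computation shows this annihilator is $\{0\}$ when $b\neq 0$ and equals $\mathbb Cv_0$ when $b=0$; in the latter case, writing $\delta(L_r,C)=c_r v_0$, using $L_r v_0=0$ (valid because $b=0$), and applying \eqref{2.111} with $(x,y,z)=(L_m,L_n,C)$ for suitable pairs (including $(1,-1)$ and $(2,-2)$, which detect the Virasoro central cocycle) forces every $c_r=0$. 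By symmetry $\delta(C,\cdot)=0$ as well, so only $\delta(L_m,L_n)$ remains to be analysed and the $C$-term of $[L_m,L_n]$ contributes nothing in what follows.

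Substituting $(x,y,z)=(L_m,L_n,L_p)$ into \eqref{2.111} and reading off the coefficient of $v_{m+n+p+k}$ gives the master relation
\begin{equation*}
(m-n)\,f_k(m+n,p)=f_k(m,p)(m+p+k+bn)-f_k(n,p)(n+p+k+bm).
\end{equation*}
Specializing $n=0$ yields, whenever $p+k\neq 0$,
\[
f_k(m,p)=f_k(0,p)\cdot\frac{p+k+bm}{p+k},
\]
and imposing the symmetry $f_k(m,p)=f_k(p,m)$ with $m=1$ determines $f_k(0,p)$ up to one free scalar $\lambda_k\in\mathbb C$, producing the ansatz
\[
f_k(m,p)=\lambda_k\cdot\frac{(1+k+bp)(p+k+bm)}{p+k+b}.
\]

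The main obstacle is the remaining symmetry check. Reinserting this ansatz into $f_k(m,p)=f_k(p,m)$ and clearing denominators, I expect---via the substitution $s=m+p$, $t=mp$ that collapses the bracket to $b(1-b)(1-s+t)=b(1-b)(1-m)(1-p)$---the clean identity
\begin{equation*}
(1+k+bp)(p+k+bm)(m+k+b)-(1+k+bm)(m+k+bp)(p+k+b)=b(1-b)(p-m)(1-m)(1-p).
\end{equation*}
Its right-hand side vanishes identically in $m,p$ exactly when $b\in\{0,1\}$, so for $b\notin\{0,1\}$ one must have $\lambda_k=0$ for every $k$ and hence $f_k(m,p)=0$ whenever $p+k\neq 0$; the leftover values at $p+k=0$ are then killed by feeding the master relation with $(m,n)$ chosen so that $m+n$ hits the offending index (e.g.\ $(m,n)=(-k+1,-1)$ when $k\neq 1$, with analogous tweaks for the few small exceptions), using that the generic $f_k$-values have already been shown to vanish. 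In the two surviving cases the ansatz simplifies transparently: for $b=0$ it collapses to the constant $\lambda_k(1+k)=:\mu_k$, producing $\delta(L_m,L_n)=\sum_k\mu_k v_{m+n+k}$; for $b=1$ the factor $1+k+bp$ equals the denominator $p+k+b$ and cancels it, yielding $f_k(m,p)=\lambda_k(m+p+k)$, which matches the second branch after setting $\mu_k=\lambda_k$. A direct verification that the claimed formulas do define symmetric biderivations then completes the proof.
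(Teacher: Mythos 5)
Your proposal is correct and follows essentially the same route as the paper: expand $\delta(L_m,L_n)=\sum_k f_k(m,n)v_{m+n+k}$, derive the recursion $(m-n)f_k(m+n,p)=(bn+m+p+k)f_k(m,p)-(bm+n+p+k)f_k(n,p)$, specialize to express everything through one scalar per $k$, and let the symmetry $f_k(m,p)=f_k(p,m)$ produce an obstruction of the form $b(b-1)(\cdots)=0$ that singles out $b\in\{0,1\}$ (your polynomial identity checks out, and is a uniform repackaging of the paper's two steps: the substitution $m=-k$ for $k\neq0$ and the $m=1$, $m=n+1$ symmetry trick for $k=0$). The only loose end is that your treatment of the exceptional indices ($k=-1$ in the normalization, $p+k=0$, and $p+k+b=0$ for integer $b$) is merely sketched — note for instance that the pair $(m,n)=(-k+1,-1)$ degenerates at $k=2$, not $k=1$ — but these are routine patches of the same kind the paper carries out explicitly for its cases $k=-m$ and $k=-m=-n$.
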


 \begin{proof} Let $\delta\in {\rm BDer}({\rm Vir}, \F_{b})$.
Suppose that
$$\delta(L_{m},L_{n})=\sum_{k\in\z}a_{m,n,k}v_{m+n+k}, \, \delta(L_{m},C)=\sum_{k\in\z}c_{m,k}v_{m+k}$$ for some $a_{m, n,k},c_{m,k}\in\mathbb C$.

From
$\delta (L_{m},[L_{0},C])=L_{0}\cdot\delta(L_{m},C)-C\cdot\delta(L_{m},L_{0})$,
we get
$ \sum_{k\in\z}(m+k)c_{m,k}v_{m+k}=0$,
then $(m+k)c_{m,k}=0$, so $\delta(L_{m},C)=c_{m,-m}v_0$.
From $\delta ([L_{m},L_{0}],C)=L_{m}\cdot\delta(L_{0},C)-L_0\cdot\delta(L_{m},C)$,
we get $ mc_{m,-m}v_0=-bmc_{0,0}v_m$.
 Then  $c_{m,-m}v_0=-bc_{0,0}v_m$ if $m\neq 0$.
 So $c_{m,-m}=0$ if $m\ne0$.
 Thus, we have $\delta(L_{m},C)=\delta_{m,0}c_{0,0}v_0$.
  From $\delta ([L_{1},L_{-1}],C)=L_{1}\cdot\delta(L_{-1},C)-L_{-1}\cdot\delta(L_{1},C)$,
   we get $\delta(L_{m},C)=0$.
Thus, we get
$$\delta(L_{m},C)=0,  \,  \forall m \in \mathbb{Z}.$$
From
$$\delta ([L_{m},L_{n}],L_{p})=L_{m}\cdot\delta(L_{n},L_{p})-L_{n}\cdot\delta(L_{m},L_{p}),$$
$$\delta (L_{m},[L_{n},L_{p}])=L_{n}\cdot\delta(L_{m},L_{p})-L_{p}\cdot\delta(L_{m},L_{n}),$$
we get
\begin{align}\label{3c}
(m-n)a_{m+n,p,k}=(bn+m+p+k)a_{m,p,k}-(bm+n+p+k)a_{n,p,k},
 \end{align}
\begin{align}\label{3d}
 (n-p)a_{m,n+p,k}=(bp+m+n+k)a_{m,n,k}-(bn+m+p+k)a_{m,p,k}.
 \end{align}
 Setting $m=p=0$ in \eqref{3c}, we get
\begin{align}\label{3eee}
 ka_{n,0,k}=(bn+k)a_{0,0,k}.
 \end{align}
  If $k\neq 0$, we get
   \begin{align}\label{3e}
        a_{n,0,k}=\frac{bn+k}{k}a_{0,0,k}.
        \end{align}
 Setting $p=0$ in \eqref{3d}, we get
 \begin{align}\label{3e1}
 (m+k)a_{m,n,k}=(bn+m+k)a_{m,0,k}.
  \end{align}
  Combining with \eqref{3e}, we get
   \begin{align}\label{3e1}
   (m+k)a_{m,n,k}=(bn+m+k)\frac{bm+k}{k}a_{0,0,k}.
   \end{align}
  Set $m=-k\neq 0$ in \eqref{3e1},  then we have
  \begin{align}\label{33}
  b(b-1)a_{0,0,k}=0, \, k\neq 0.
  \end{align}

Next, we shall consider $a_{m, n, 0}$.
 From \eqref{3c}  and \eqref{3d}, we get
\begin{align}\label{33c}
(m-n)a_{m+n,p,0}=(bn+m+p)a_{m,p,0}-(bm+n+p)a_{n,p,0},
\end{align}
\begin{align}\label{33d}
(n-p)a_{m,n+p,0}=(bp+m+n)a_{m,n,0}-(bn+m+p)a_{m,p,0}.
\end{align}
 Set $p=0$ in \eqref{33d},  then we have
 \begin{align}\label{444d}
  a_{m,n,0}=\frac{bn+m}{m}a_{m,0,0}, \, m \neq 0.
  \end{align}
 By $\delta(L_{m},L_{n})=\delta(L_{n},L_{m})$, we have $a_{m,n,0}=a_{n, m, 0}$ for all $m, n\in\mathbb Z$.
  So
 \begin{align}\label{55d}
   a_{m,n,0}=\frac{bn+m}{m}a_{m,0,0}=\frac{bm+n}{n}a_{n,0,0}, \,   \,  \forall m,n\neq0.
\end{align}
 Set $m=1$ in \eqref{55d},
\begin{align}\label{k1}
(n+b)a_{n,0,0}=n(bn+1)a_{1,0,0}.
   \end{align}
 Setting $m=n+1$ in \eqref{55d} and using \eqref{k1}, we have
\begin{align}\label{k2}
(bn+n+1)(n+b)a_{n+1,0,0}=(bn+1)(n+1)(bn+n+b)a_{1,0,0}.
\end{align}
 Combining with \eqref{k1} and \eqref{k2}, we get
  \begin{align}\label{k222}
  b(b-1)a_{1,0,0}=0.
  \end{align}

 \noindent{\bf Case 1.}  $b=0$.  If $k\neq -m$ in  \eqref{3e1}, we have
  $$a_{m,n,k}=a_{m,0,k}=a_{0,0,k}, \,  \forall k\ne 0.$$
   If $k=-m$,  setting $n=0$ in \eqref{3c},
    we get
   $$a_{m,n,-m}=a_{0,n,-m}=a_{0,0,-m}, \forall m\neq n.$$
    If $k=-m=-n$, from \eqref{3d},
    we have
    $$a_{m,m,-m}=a_{m,0,-m}=a_{0,0,-m}.$$
     So
     $$a_{m,n,k}=a_{m,0,k}=a_{0,0,k}, \forall  m,n \in \mathbb{Z}, \, k\ne 0.$$
   By \eqref{55d}, we have
    $$a_{m,n,0}=a_{m,0,0}=a_{1,0,0}, \forall  m,n \in \mathbb{Z}.$$
  Thus, we have
$$\delta(L_{m},L_{n})=\sum_{k\in \mathbb{Z}}\mu _{k}\upsilon_{m+n+k},$$
where $\mu_{0}=a_{1,0,0}$ and $\mu_{k}=a_{0,0,k}$ for all $k\neq0$.

 \noindent{\bf Case 2.} $b=1$. Setting $n=p=0$ in  \eqref{3c}, we get
  $$a_{m,0,k}=\frac{m+k}{k}a_{0,0,k}, \forall  m \in \mathbb{Z},k\neq 0.$$
  Setting $p=0$ in \eqref{3d},
   we get
  $$ a_{m,n,k}=(m+n+k)\frac{a_{0,0,k}}{k}, \forall k\neq -m \ne0.$$
If $k=-m$, from \eqref{3c} with $n=0$,
 we get
$$ a_{m,n,-m}=\frac{n}{-m}a_{0,0,-m}, \forall m\neq  n\neq 0.$$
If $k=-n=-m$, from \eqref{3d},
we get
$$a_{m,m,-m}=-a_{0,0,-m}.$$
So
 $$ a_{m,n,k}=(m+n+k)\frac{a_{0,0,k}}{k}, \forall m,n\in \mathbb{Z}, k \ne0.$$
By \eqref{55d},
we have
$$a_{m,n,0}=(m+n)a_{1,0,0}.$$
 Thus, we have
  $$\delta(L_{m},L_{n})=\sum_{k\in \mathbb{Z}}(m+n+k)\mu_{k}\upsilon_{m+n+k},$$
   where $\mu_k=\frac{1}{k}a_{0,0, k}$ for $k\ne 0$ and $\mu_0=a_{1, 0, 0}$.

 \noindent{\bf Case 3.}  $b\neq 0, 1$.  By \eqref{444d} and \eqref{k1},  we get $a_{m, n, 0}=0$ for all $m, n, k\in\mathbb Z$. Combining with \eqref{33}, we can get
$\delta(L_{m},L_{n})=0$.
\end{proof}

With Theorem \ref{main}, we can easily get all biderivations on some Lie algebras related to the Virasoro algebra.

  \begin{coro}\label{vir}
  Every symmetric biderivation of {\rm Vir} is trivial.
  \end{coro}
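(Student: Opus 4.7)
The plan is to reduce to Theorem \ref{main} by exploiting the natural short exact sequence of ${\rm Vir}$-modules
\[
0 \longrightarrow \mathbb{C}C \longrightarrow {\rm Vir} \longrightarrow \F_{-1} \longrightarrow 0,
\]
in which ${\rm Vir}$ acts on itself by the adjoint representation, $\mathbb{C}C$ is the (trivial) center, and the surjection sends $L_m \mapsto v_m$ and $C \mapsto 0$. A quick check using $L_m \cdot v_n = -(n-m)v_{m+n}=(m-n)v_{m+n}$ (the $b=-1$ density action) against $[L_m, L_n] \equiv (m-n)L_{m+n} \pmod{\mathbb{C}C}$ confirms this is a ${\rm Vir}$-module homomorphism.

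First I would take a symmetric biderivation $\delta \in {\rm BDer}({\rm Vir})$ and compose it with the quotient projection ${\rm Vir} \to \F_{-1}$ to obtain a symmetric biderivation $\bar{\delta}: {\rm Vir} \times {\rm Vir} \to \F_{-1}$. Since $b=-1 \notin \{0,1\}$, Theorem \ref{main} applies in the ``otherwise'' case and forces $\bar{\delta} = 0$. Consequently the image of $\delta$ is contained in $\mathbb{C}C$.

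Now $\mathbb{C}C$ is a trivial ${\rm Vir}$-module, so Remark \ref{center} gives $\delta(x,[y,z]) = 0$ for all $x,y,z \in {\rm Vir}$. Because ${\rm Vir}$ is perfect (every $L_{m+n}$ is a bracket, and $C$ itself is a linear combination of $[L_2,L_{-2}]$ and $[L_1,L_{-1}]$), this immediately forces $\delta \equiv 0$. The only real obstacle in the argument is the module-theoretic identification of ${\rm Vir}/\mathbb{C}C$ with the density module $\F_{-1}$; once that extension is in hand, Theorem \ref{main} handles the non-central component while perfectness combined with Remark \ref{center} disposes of what remains in the center.
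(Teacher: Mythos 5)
Your argument is correct and is essentially the paper's own proof, just written out in full: the paper likewise identifies ${\rm Vir}/\mathbb{C}C$ with $\F_{-1}$, invokes Theorem \ref{main} in the $b=-1$ case to push the image of $\delta$ into the center, and then uses Remark \ref{center} together with the perfectness of ${\rm Vir}$ to conclude $\delta=0$. The extra details you supply (checking the module isomorphism and that composing with the quotient map preserves the biderivation identities) are exactly the steps the paper leaves implicit.
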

\begin{proof}
 Clearly,  ${\rm Vir}/\cc C\cong \F_{-1}$. So it follows directly by Theorem \ref{main} and Remark \ref{center}.\end{proof}

\begin{coro} \label{3.5}
  Let $\delta$ be a symmetric biderivation of $W(0, b)$, then
 \begin{align*}
&\delta(L_{m},L_{n})=\begin{cases}
\sum_{k\in \mathbb{Z}}\mu _{k}I_{m+n+k}, & if  \  b=0,\\
\sum_{k\in \mathbb{Z}}(m+n+k)\mu_{k}I_{m+n+k}, & if \ b=1,\\
0 ,& otherwise,
\end{cases}\\ \label{www2}
&\delta(L_{m},I_{n})=\delta(I_{m},I_{n})=\delta(x, C)=0
 \end{align*}
for any $m,n\in \mathbb{Z}$, $x\in W(0, b)$, where $\mu_{k} \in \mathbb{C}$.
\end{coro}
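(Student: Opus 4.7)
The plan is to decompose $W(0, b) = \mathrm{Vir} \oplus \mathrm{span}\{I_k : k \in \z\} \oplus \cc C$ and analyse each component of a symmetric biderivation $\delta$ separately, reducing the Vir-block to Theorem~\ref{main}. First I would show that $\delta(x, C) = 0$ for all $x$: since $C$ is central, \eqref{2.111} with $C$ in place of $x$ gives $[y, \delta(C, z)] = 0$ for every $y$, so $\delta(C, z)$ lies in the centre of $W(0, b)$; applying \eqref{2.111} to $[L_m, L_{-m}] = 2m L_0 + \frac{1}{12}(m^3 - m) C$ paired against $z$ then annihilates the residual central contributions, yielding $\delta(C, z) = 0$ and by symmetry $\delta(z, C) = 0$.

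Next, for $\delta(L_m, L_n)$ I would write the output in coordinates along the three summands and split the biderivation axioms by projection. The $I$-coordinate subsystem is precisely the one analysed in Theorem~\ref{main} for biderivations $\mathrm{Vir} \to \F_b$, producing the formulas displayed in the corollary; the $\mathrm{Vir}$-coordinate subsystem is the analogous one for $\mathrm{Vir} \to \F_{-1}$ (since $\mathrm{ad}(L_m) L_n = (m - n) L_{m+n}$ modulo a central piece killed by the previous step), which vanishes by Theorem~\ref{main} because $-1 \notin \{0, 1\}$. With the $\mathrm{Vir}$-component gone, the $C$-coefficient equation reduces to $(m - n) h_{m + n, p} = 0$, forcing $h = 0$.

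Finally, for $\delta(L_p, I_n)$ and $\delta(I_m, I_n)$ I would run a weight argument with respect to $\mathrm{ad}(L_0)$: using \eqref{1.10} with $y = L_0$ and observing that $\delta(L_p, L_0) \in \mathrm{span}\{I_k\}$ is annihilated by $I_n$ (because the $I_k$ commute), $\delta(L_p, I_n)$ is forced into the $L_0$-weight-$n$ eigenspace and hence has the shape $A_{p, n} L_n + B_{p, n} I_n + c_p \delta_{n, 0} C$; feeding this into \eqref{2.111} for $[L_p, L_q]$ and into \eqref{1.10} for $[L_m, I_n]$, together with the collapse $[L_1, L_{-1}] = 2 L_0$ to eliminate the weight-zero remnants, drives $A$, $B$ and $c$ to zero. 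The same weight argument localises $\delta(I_m, I_n)$ to the $L_0$-weight-$(m + n)$ eigenspace, and combining $[I_k, I_\ell] = 0$ with the now-established $\delta(L_p, I_n) = 0$ then collapses all $L$- and $I$-components. The principal obstacle will be the weight-zero corner, particularly showing that the possible $C$-coefficient of $\delta(I_0, I_0)$ vanishes: the $\mathrm{ad}(L_0)$ argument degenerates there, so for $b = 0$ one has to exploit that $I_0$ is central in $W(0, 0)$ (reducing to the $\delta(C, \cdot) = 0$ argument), and for other $b$ one has to combine axioms involving the Virasoro central charge with the vanishing of $\delta(L_p, I_0)$ to close out this last coefficient.
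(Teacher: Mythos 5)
Your overall strategy coincides with the paper's: the published proof consists entirely of the decomposition ${\rm BDer}({\rm Vir},W(0,b))={\rm BDer}({\rm Vir},{\rm Vir})+{\rm BDer}({\rm Vir},\F_b)$ followed by Theorem \ref{main} and Corollary \ref{vir}, and it leaves the blocks $\delta(L_m,I_n)$, $\delta(I_m,I_n)$, $\delta(x,C)$ entirely to the reader. Your plan fills in exactly those omitted blocks, and the weight arguments you describe do work there, up to one slip: the $\mathrm{ad}(L_0)$ identity pins $\delta(I_m,I_n)$ to the eigenspace of index $n$ and, by symmetry, also to that of index $m$ --- not to index $m+n$ --- which in fact kills everything at once when $m\neq n$ and is strictly better for you.

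The genuine gap is the ``weight-zero corner'' you yourself flag, and for $b=1$ it cannot be closed, because there the statement itself fails. In $W(0,1)$ one has $[L_m,I_{-m}]=-(m-m)I_0=0$, so $I_0\notin[W(0,1),W(0,1)]$; let $\lambda$ be the linear functional reading off the $I_0$-coordinate, which therefore vanishes on the derived subalgebra. Then $\delta(x,y)=\lambda(x)\lambda(y)C$ is symmetric and satisfies \eqref{2.111} and \eqref{1.10} identically: the left-hand sides vanish because $\lambda([W,W])=0$, the right-hand sides because the only nonzero value $C$ is central. This is a nonzero symmetric biderivation of $W(0,1)$ with $\delta(I_0,I_0)=C$, so no combination of the axioms can force that $C$-coefficient to zero, and your proposed closing move (``combine axioms involving the Virasoro central charge with the vanishing of $\delta(L_p,I_0)$'') is doomed for $b=1$. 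For every $b\neq1$ the corner closes by the easy route: $I_0=\frac{1}{m(1-b)}[L_m,I_{-m}]$ for any $m\neq0$, whence $\delta(I_0,I_0)=\frac{1}{m(1-b)}\bigl(L_m\cdot\delta(I_{-m},I_0)-I_{-m}\cdot\delta(L_m,I_0)\bigr)=0$ once the off-diagonal values are known to vanish; this also covers $b=0$ without needing the centrality of $I_0$. So your plan establishes the corollary for all $b\neq1$, but for $b=1$ the correct classification must admit the extra one-parameter family $\delta(I_0,I_0)\in\cc C$, a case the paper's two-line proof silently misses.
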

\begin{proof}  ${\rm BDer}({\rm Vir}, W(0, b))={\rm BDer}({\rm Vir}, {\rm Vir})+{\rm BDer}({\rm Vir}, \F_b)$.
 By Theorem \ref{main} and Corollary \ref{vir}, we can easily get the conclusion.
\end{proof}


\section{ Biderivations from the super Virasoro algebra to its density modules}

Now we introduce the so-called density module $\mathfrak F_b$ over the super
Virasoro algebra for some $b\in\mathbb C$.

\begin{defi}
 The super Virasoro algebra ${\rm SVir}$ (Ramond sector) is a Lie superalgebra spanned by
  $ \{ L_{m}, G_{m}, C \mid m\in \mathbb{Z}\}$, equipped with the following relations:
\begin{align*}
& [L_{m}, L_{n}]= (m-n)L_{m+n}+\frac{1}{12}\delta_{m+n, 0}(m^{3}-m)C, \\
&[L_m,G_r]=(\frac{m}{2}-r)G_{m+r},\\
& [G_{r}, G_{s}]= 2L_{r+s}+\frac{1}{3}\delta_{r+s, 0}(r^{2}-\frac{1}{4})C,\\
&[G_{r}, C]=[L_{m}, C] =0,\, \forall m, n, r, s\in \mathbb{Z}.
\end{align*}
\end{defi}

Clearly, the even part ${\rm SVir}_{\bar0}$ is the Virasoro algebra. Some structures and representations was studied in \cite{DGL, CLL, GMP, LPX1, LPX2, S}, etc.

\begin{defi} \cite{S, GMP}
For $b\in\mathbb{C}$, the density module $\mathfrak F_b$ over ${\rm SVir}$ is the vector space
$$\mathfrak F_b=\bigoplus_{n\in\mathbb{Z}}\mathbb{C}I_{n}\bigoplus\bigoplus_{r\in
\mathbb{Z}}\mathbb{C}J_{r}$$ is a ${\rm SVir}$-supermodule with the parity
$$ |I_{n}|=\overline{0},     \      |J_{r}|=\overline{1},$$
satisfying the following commutation relations:
 \begin{align*}
     &L_{m}\cdot I_{n} =-(n+bm)I_{m+n},\\
     &L_{m}\cdot J_{r} =-(r+(b+\frac{1}{2})m)J_{m+r},\\
  &G_{r}\cdot I_{m} =-(\frac{m}{2}+br) J_{m+r},    \\
   &G_{r}\cdot J_{s}=2I_{r+s},\quad  C\cdot\mathfrak F_b=0
           \end{align*}
for all $m,n, r, s \in\mathbb{Z}$.
\end{defi}

Define a  linear map $\varepsilon: {\rm SVir}\rightarrow\mathfrak F_b$ as follows:
\begin{equation}\label{Gdef}
\varepsilon(L_{m})=I_{m}, \varepsilon(G_{m})=J_{m}, \varepsilon(C)=0, \quad\, \forall\,m\in\mathbb{Z}.
\end{equation}
It is easy to check that $\varepsilon\in {\rm Cent_{SVir}}(\mathfrak F_b)$.

\begin{lemm} \label{scen}
For the  super Virasoro algebra {\rm SVir} and its density module $\mathfrak F_b$, we have
\begin{align}
{\rm Cent_{{\rm SVir}}}(\mathfrak F_b)=\begin{cases}
\mathbb C\varepsilon, & if \  b=-1,\\
0,& otherwise.
\end{cases}
\end{align}
\end{lemm}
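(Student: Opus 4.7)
The plan is to mimic the computation of Lemma \ref{cen}, exploiting that the even part ${\rm SVir}_{\bar 0}$ is exactly ${\rm Vir}$ and that the even and odd parts of $\mathfrak F_b$ are density-type ${\rm Vir}$-modules with parameters $b$ and $b+\tfrac12$ respectively. I would first write $\gamma=\gamma_{\bar 0}+\gamma_{\bar 1}$ in its $\mathbb{Z}_2$-homogeneous components; since the centroid condition $\gamma([x,y])=x\cdot\gamma(y)$ preserves parity, the two pieces can be analyzed independently. In each case the work is to expand $\gamma$ in the obvious basis, translate the centroid condition into a family of scalar recursions, and eliminate unwanted terms.

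For the even component $\gamma_{\bar 0}$, write $\gamma_{\bar 0}(L_m)=\sum_{k}a_{m,k}I_{m+k}$, $\gamma_{\bar 0}(G_r)=\sum_{k}b_{r,k}J_{r+k}$, $\gamma_{\bar 0}(C)=\sum_{k}c_kI_k$. The restriction of $\gamma_{\bar 0}$ to ${\rm Vir}$ takes values in $\bigoplus\mathbb C I_n\cong \F_b$, so Lemma \ref{cen} gives $\gamma_{\bar 0}(C)=0$ together with either $b=-1$ and $\gamma_{\bar 0}(L_m)=a_{0,0}I_m$ or else $\gamma_{\bar 0}|_{{\rm Vir}}=0$. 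Next, expanding $\gamma_{\bar 0}([L_m,G_r])=L_m\cdot\gamma_{\bar 0}(G_r)$ yields the recursion
\begin{align*}
(r-\tfrac{m}{2})b_{m+r,k}=\bigl(r+k+(b+\tfrac12)m\bigr)b_{r,k};
\end{align*}
setting $m=0$ forces $b_{r,k}=0$ for $k\neq 0$, and the residual case $k=0$ determines every $b_{m,0}$ in terms of $b_{0,0}$. Finally, $\gamma_{\bar 0}([G_r,G_s])=G_r\cdot\gamma_{\bar 0}(G_s)$ reads $2a_{0,0}I_{r+s}=2b_{s,0}I_{r+s}$, which pins $b_{s,0}=a_{0,0}$ for every $s$ and forces $b=-1$ in the nontrivial case, yielding $\gamma_{\bar 0}=a_{0,0}\varepsilon$.

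For the odd component $\gamma_{\bar 1}$, write $\gamma_{\bar 1}(L_m)=\sum_{k}\alpha_{m,k}J_{m+k}$ and $\gamma_{\bar 1}(G_r)=\sum_{k}\beta_{r,k}I_{r+k}$, and rerun the same analysis. The relation $\gamma_{\bar 1}([L_m,L_n])=L_m\cdot\gamma_{\bar 1}(L_n)$ says precisely that $\gamma_{\bar 1}|_{{\rm Vir}}$ is a centroid element into $\F_{b+1/2}$, so Lemma \ref{cen} leaves nonzero freedom only when $b+\tfrac12=-1$; meanwhile $\gamma_{\bar 1}([L_m,G_r])=L_m\cdot\gamma_{\bar 1}(G_r)$ gives the shifted recursion that leaves freedom only when $b=0$. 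Since these exceptional values are disjoint, the $[L,L]$ and $[L,G]$ relations alone pin $\gamma_{\bar 1}|_{{\rm Vir}}=0$ and reduce $\gamma_{\bar 1}(G_r)$ to the form $\beta_{0,0}\delta_{r,0}I_0$ in the single surviving case $b=0$; this residual term is then killed by $\gamma_{\bar 1}([G_r,L_m])=G_r\cdot\gamma_{\bar 1}(L_m)$, while $\gamma_{\bar 1}(C)=0$ follows from $\gamma_{\bar 1}([L_m,C])=0=L_m\cdot\gamma_{\bar 1}(C)$ exactly as in Lemma \ref{cen}. The main technical obstacle is bookkeeping in the odd case: several distinguished $b$-values (notably $b=0,-\tfrac12,-\tfrac32$) survive individual relations, and only the combined use of all three families of constraints $[L,L]$, $[L,G]$, $[G,G]$, together with careful tracking of super signs in the $[G_r,G_s]$ relation, eliminates them.
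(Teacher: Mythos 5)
Your overall architecture is sound, and on the even component it is essentially the paper's own computation: the paper likewise expands $\gamma(L_m)$ in the $I$'s and $\gamma(G_r)$ in the $J$'s, obtains the recursions $(m-n)a_{m+n,k}=(-bm-n-k)a_{n,k}$ and $(\tfrac{m}{2}-r)b_{m+r,k}=-(k+r+(b+\tfrac12)m)b_{r,k}$, deduces $(b+1)(2b+1)b_{0,0}=0$, and uses the $[G,G]$ bracket to tie the $a$'s to the $b$'s. Your version is actually more complete in three respects: you quote Lemma \ref{cen} for the restriction to ${\rm Vir}$ instead of re-deriving it; you explicitly split $\gamma$ into parity components and treat the odd component, which the paper's proof silently omits (it only ever writes an even ansatz); and you dispose of $\gamma(C)$, which the paper never mentions. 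Your sign in the $[G_r,G_s]$ relation (giving $a_{r+s,0}=b_{s,0}$ with no minus) is also the one consistent with the stated definition of ${\rm Cent}$ and with $\varepsilon$ actually lying in the centroid; the paper's displayed relation $\gamma([G_r,G_s])=-G_r\cdot\gamma(G_s)$ appears to be a typo.

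There is, however, a concrete slip in your odd-part bookkeeping. Writing $\gamma_{\bar1}(G_r)=\sum_k\beta_{r,k}I_{r+k}$, the family $\gamma_{\bar1}([L_m,G_r])=L_m\cdot\gamma_{\bar1}(G_r)$ gives $(\tfrac{m}{2}-r)\beta_{m+r,k}=-(r+k+bm)\beta_{r,k}$; killing $k\ne0$ and setting $r=0$ yields $\beta_{m,0}=-2b\beta_{0,0}$ for $m\neq 0$, and the remaining consistency conditions give $b(2b+1)\beta_{0,0}=0$ together with $(2b-3)(2b+1)\beta_{0,0}=0$. The exceptional value surviving this family is therefore $b=-\tfrac12$, where $\gamma_{\bar1}(G_r)=\beta_{0,0}I_r$ for \emph{all} $r$ is allowed --- not $b=0$ with a solution supported at $r=0$ (at $b=0$, taking $m=-r$ with $r\ne0$ already forces $\beta_{0,0}=0$). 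Fortunately your finishing move still closes the argument at $b=-\tfrac12$: since $b+\tfrac12=0\ne-1$, Lemma \ref{cen} gives $\gamma_{\bar1}(L_m)=0$, and then $\gamma_{\bar1}([G_r,L_m])=G_r\cdot\gamma_{\bar1}(L_m)=0$ forces $(\tfrac{m}{2}-r)\beta_{m+r,0}=0$ for all $m,r$, hence $\beta\equiv0$; alternatively the $[G,G]$ relation does the same. So the proof survives, but the intermediate claim must be corrected. A smaller point: at $b=-\tfrac12$ the relation $L_m\cdot\gamma_{\bar1}(C)=0$ does not constrain the coefficient of $J_0$ in $\gamma_{\bar1}(C)$, so ``exactly as in Lemma \ref{cen}'' is not quite enough there; you should also invoke $\gamma_{\bar1}([G_r,C])=0=G_r\cdot\gamma_{\bar1}(C)$ to kill that last coefficient.
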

\begin{proof}
Let  $\gamma\in {\rm Cent_{SVir}}(\mathfrak F_b)$.
Without loss of generality,  we can suppose that $ \gamma(L_{m})=\sum_{k\in \mathbb{Z}}a_{m,k}I_{m+k}$ and
 $ \gamma(G_{r})=\sum_{k\in\mathbb{Z}}b_{r,k}J_{r+k}$  for some  $a_{m, k},b_{r,k} \in\mathbb C$.

From $ \gamma([L_m, L_n])=L_{m}\cdot\gamma(L_{n})  $, we get
 \begin{align}\label{4a}
  (m-n)a_{m+n,k}=(-bm-n-k)a_{n, k}.
  \end{align}
  Setting $m=0$ in \eqref{4a}, we have $a_{n, k}=0$ for $k\neq 0$. For $k=0$,  \eqref{4a} becomes
  \begin{align}\label{4a-1}
  (m-n)a_{m+n,0}=(-bm-n)a_{n, 0}, \,  \forall m,n\in\mathbb Z.
  \end{align}
  Setting $n=0 $ in \eqref{4a-1}, we have $a_{m,0}=-b a_{0,0}$ for $m \neq 0$.
  Then by \eqref{4a-1}, we get
  $$b(b+1)a_{0,0}=0, \,   \forall m \neq 0.$$
  If $b=-1$, then $a_{m,0}=a_{0,0}$.
   If $b=0$, then $a_{m,0}=a_{0,0}=0$.
   If $b\neq-1,0$, then $a_{0,0}=0$.

 From $ \gamma([L_m, G_r])=L_{m}\cdot\gamma (G_{r} )$,
  we get
 \begin{align}\label{4b}
 (\frac{m}{2}-r)b_{m+r, k}=-(k+r+(b+\frac{1}{2})m)b_{r, k}.
 \end{align}
 Setting $m=0$ in \eqref{4b}, we have
 $ b_{r, k}=0$ for $k\neq 0$.  For $k=0$,  \eqref{4b} becomes
  \begin{align}\label{4b-1}
 (\frac{m}{2}-r)b_{m+r, 0}=-(r+(b+\frac{1}{2})m)b_{r, 0}, \,  \forall m,r \in  \mathbb Z.
 \end{align}
 Setting $r=0 $ in \eqref{4b-1}, we have $b_{m,0}=-(2b+1) b_{0,0}$ for $m \neq 0$.
  Then by \eqref{4b-1}, we get
  $$(b+1)(2b+1)b_{0,0}=0, \,   \forall m \neq 0.$$
  If $b=-1$, then $b_{m,0}=b_{0,0}$.
   If $b=-\frac{1}{2}$, then $b_{m,0}=b_{0,0}=0$.
   If $b\neq-\frac{1}{2},0$, then $b_{0,0}=0$.

From $ \gamma([G_r, G_s])=-G_{r}\cdot\gamma (G_{s})$,
we get $\label{(1)} a_{r+s,0}=-b_{s,0}$.

 \end{proof}

By Theorem 3.8 in \cite{TMC} and Lemma \ref{scen} we can get the following result.

\begin{theo}
 Every skew-symmetric biderivation $\delta\in {\rm BDer}({\rm SVir}, \mathfrak F_b)$ is as follows:
\begin{align}
&\delta(x, y)=\begin{cases}
\lambda\varepsilon([x, y]), & if \  b=-1,\\
0,& otherwise
\end{cases}
\end{align}
for all  $x, y\in{\rm SVir}$,  where $\lambda\in \mathbb{C }$.
\end{theo}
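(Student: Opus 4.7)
The plan is to reduce the problem to two already-available ingredients: the super-analogue of Theorem 2.3 of \cite{BZ}, namely Theorem 3.8 of \cite{TMC}, and the centroid computation in Lemma \ref{scen}. The former asserts that for a perfect Lie superalgebra $L$ and an $L$-module $M$ with $Z_M(L)=0$, every skew-symmetric biderivation $\delta\colon L\times L\to M$ is of the form $\delta(x,y)=\gamma([x,y])$ for some $\gamma\in{\rm Cent}_L(M)$. Once the two hypotheses are verified for $L={\rm SVir}$ and $M=\mathfrak F_b$, the dichotomy in the theorem is read off directly from Lemma \ref{scen}.

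First I would verify that ${\rm SVir}$ is perfect. This is routine from the defining relations: every $L_k$ is a scalar multiple of $[L_m,L_{k-m}]$ for any choice with $2m\ne k$; every $G_k$ is a scalar multiple of $[L_m,G_{k-m}]$ for any choice with $3m\ne 2k$; and the central element $C$ is recovered, up to a nonzero scalar, from $[L_m,L_{-m}]-2m L_0$ for any $|m|\ge 2$. Then I would verify $Z_{\mathfrak F_b}({\rm SVir})=0$ by taking a candidate $v=\sum_n a_n I_n+\sum_r b_r J_r$, using $L_0\cdot v=0$ together with the explicit actions $L_m\cdot I_n=-(n+bm)I_{m+n}$, $L_m\cdot J_r=-(r+(b+1/2)m)J_{m+r}$, and $G_r\cdot I_m=-(m/2+br)J_{m+r}$, $G_r\cdot J_s=2I_{r+s}$ to first restrict the support to a small set of weights and then force each surviving coefficient to vanish. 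For the degenerate values $b=0$ or $b=-1/2$, where certain basis vectors are annihilated by the even Cartan element $L_0$, the odd generators $G_r$ are needed to eliminate the remaining coefficients.

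With both hypotheses established, the structure theorem produces $\gamma\in{\rm Cent}_{{\rm SVir}}(\mathfrak F_b)$ with $\delta(x,y)=\gamma([x,y])$, and Lemma \ref{scen} identifies this centroid as $\mathbb C\varepsilon$ when $b=-1$ and $\{0\}$ otherwise; taking $\lambda$ to be the coefficient of $\varepsilon$ yields exactly the stated formula. I expect the centralizer verification to be the main (though still elementary) obstacle, since it requires the case analysis at the exceptional values of $b$ just noted; every other step is either a direct citation or an immediate algebraic check.
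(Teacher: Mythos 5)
Your overall route is exactly the paper's: the proof in the text is a one-line citation of Theorem 3.8 of \cite{TMC} combined with Lemma \ref{scen}, and your filling in of the hypotheses (perfectness of ${\rm SVir}$ and vanishing of $Z_{\mathfrak F_b}({\rm SVir})$) is the right thing to attempt. The perfectness check is fine.

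However, your verification of $Z_{\mathfrak F_b}({\rm SVir})=\{0\}$ breaks down at $b=0$, and this is a genuine gap. After using $L_0$ you are left with $v=a_0I_0+b_0J_0$; for $b=0$ one has $L_m\cdot I_0=-(0+0\cdot m)I_m=0$ \emph{and} $G_r\cdot I_0=-(\tfrac{0}{2}+0\cdot r)J_r=0$, so the odd generators do \emph{not} eliminate the coefficient $a_0$ as you claim (they do work for $b=-\tfrac12$, since $G_r\cdot J_0=2I_r$). Hence $Z_{\mathfrak F_0}({\rm SVir})=\mathbb{C}I_0\neq\{0\}$, the hypothesis of the cited structure theorem fails, and the dichotomy cannot simply be "read off" for $b=0$. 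To close the case $b=0$ you would need a supplementary argument, e.g.\ pass to the quotient module $\mathfrak F_0/\mathbb{C}I_0$ (whose invariants do vanish, since $G_r\cdot J_0=2I_r\notin\mathbb{C}I_0$), apply the structure theorem there together with a computation of ${\rm Cent}_{{\rm SVir}}(\mathfrak F_0/\mathbb{C}I_0)$ — which is not what Lemma \ref{scen} computes — and then kill the remaining $\mathbb{C}I_0$-valued part by perfectness of ${\rm SVir}$ via the relation $f([x,y],z)=0$. The published proof shares this defect, as it cites the theorem without checking $Z_M(L)=0$; but since you made the verification explicit, the false step at $b=0$ is squarely part of your argument and must be repaired.
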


 \begin{theo}\label{main2}
  Let $\delta$ be a symmetric biderivation $\delta: {\rm SVir} \times {\rm SVir} \rightarrow \mathfrak F_b$,
  then
 $$ \delta(x,y)=0,   \, \forall x, y \in{\rm SVir}.$$
 \end{theo}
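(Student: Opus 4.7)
The plan is to split $\delta=\delta_{\bar 0}+\delta_{\bar 1}$ into its $\mathbb Z_2$-homogeneous components and treat each separately. Both components will be killed by combining the classification of symmetric biderivations from $\mathrm{Vir}$ to its density modules (Theorem \ref{main}) with the extra rigidity forced by the odd bracket $[G_r,G_s]=2L_{r+s}+\text{central}$.

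First I would dispose of the central element $C$ exactly as in Lemma \ref{cen}/Theorem \ref{main}: using $\delta(L_m,[L_0,C])=0$, $\delta([L_m,L_0],C)=\cdots$, and $\delta([L_1,L_{-1}],C)=\cdots$, one forces $\delta(L_m,C)=0$. The analogous identities using the Ramond relations $[L_m,G_r]=(\tfrac{m}{2}-r)G_{m+r}$ and $[G_r,C]=0$ force $\delta(G_r,C)=0$. Super-symmetry and relation (\ref{1.10}) with $x=C$ then give $\delta(C,\cdot)=0$.

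Next I restrict to $\mathrm{Vir}\times\mathrm{Vir}\to\mathfrak F_b$. As a $\mathrm{Vir}$-module, $\mathfrak F_b$ decomposes into the even piece $\bigoplus\mathbb C I_n\cong\F_b$ and the odd piece $\bigoplus\mathbb C J_r\cong\F_{b+1/2}$. For $\delta_{\bar 0}$, the component $\delta_{\bar 0}(L_m,L_n)$ is a symmetric biderivation from $\mathrm{Vir}$ to $\F_b$, so by Theorem \ref{main} it is nontrivial only for $b\in\{0,1\}$, with the explicit shapes displayed there (with $v_\bullet$ replaced by $I_\bullet$). For $\delta_{\bar 1}$, the component $\delta_{\bar 1}(L_m,L_n)$ lies in $\F_{b+1/2}$, so by the same theorem it can only be nontrivial when $b\in\{-\tfrac 12,\tfrac 12\}$. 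In the same way, the Vir-recursion $(n/2-r)\delta(L_m,G_{n+r})=L_n\cdot\delta(L_m,G_r)-G_r\cdot\delta(L_m,L_n)$ coming from (\ref{1.10}) with $(x,y,z)=(L_m,L_n,G_r)$ propagates $\delta(L_m,G_r)$ off the ``seed'' $\delta(L_m,G_0)$, and that seed is further pinned down by restricting $n\mapsto \delta(L_n,G_0)$ to satisfy its own Vir-biderivation type relation. This step reduces all unknowns to the finite family of parameters $\mu_k$ appearing in the two surviving cases of Theorem \ref{main}, plus similar parameters for the odd target.

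The core of the argument—and the main obstacle—is to show these surviving parameters vanish. For this I would apply the biderivation identity (\ref{1.10}) with $(x,y,z)=(L_m,G_r,G_s)$:
\begin{equation*}
2\,\delta(L_m,L_{r+s})=\pm\,G_r\cdot\delta(L_m,G_s)\pm G_s\cdot\delta(L_m,G_r),
\end{equation*}
where the central contribution drops out because $\delta(L_m,C)=0$, and the signs are determined by $|\delta|$. The left-hand side has the explicit shape dictated by Theorem \ref{main}, while the right-hand side is governed by $G_r\cdot J_k=2I_{r+k}$ (resp.\ $G_r\cdot I_k=-(k/2+br)J_{r+k}$). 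Comparing coefficients of $I_{n+r+s+k}$ (resp.\ $J_{n+r+s+k}$) after also feeding in the recursion from the previous paragraph yields an overdetermined linear system in the $\mu_k$'s as $r,s$ vary; specializing, e.g., $r=s$ and then $r=-s$ produces incompatible relations that force every $\mu_k=0$. A parallel argument applied to $\delta(G_p,[G_r,G_s])=\delta(G_p,2L_{r+s})$ handles $\delta(G_r,G_s)$ (which is actually skew in $r,s$ by super-symmetry of $\delta$), concluding that $\delta\equiv 0$. The delicate point in this last step is bookkeeping of the $(b+1/2)$-shift and the factor $(m/2-r)$ when the pivotal indices hit the zeros of these linear expressions; these boundary indices are exactly the ones that need the auxiliary identities $\delta(L_0,\cdot)$ and $\delta(\cdot,G_0)$ to be treated by hand.
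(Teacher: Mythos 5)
Your proposal is correct in outline and follows the same overall strategy as the paper: restrict $\delta$ to ${\rm Vir}\times{\rm Vir}$, use the decomposition $\mathfrak F_b=\F_b\oplus\F_{b+\frac12}$ together with Theorem \ref{main} to pin $\delta(L_m,L_n)$ down to the four exceptional values $b\in\{0,1,-\frac12,\frac12\}$ (split by the parity of $\delta$, exactly as in the paper's Steps 1--2), and then kill the surviving parameters and the remaining components $\delta(L_m,G_r)$, $\delta(G_r,G_s)$, $\delta(\cdot,C)$ by coefficient chasing on the super-biderivation identities. The one genuine difference is the identity you choose for the decisive coupling: you evaluate $\delta(L_m,[G_r,G_s])$ and exploit $[G_r,G_s]=2L_{r+s}+{\rm central}$, whereas the paper evaluates $\delta(L_m,[L_n,G_r])$ and uses the term $G_r\cdot\delta(L_m,L_n)$ (its equations (\ref{d}) and (\ref{f})); both routes tie $\delta(L_m,G_r)$ to $\delta(L_m,L_n)$ and both close the argument. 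One caveat: your identity alone is \emph{not} overdetermined in the way you suggest. For instance, for $b=0$ and $\delta$ even it only yields $\mu_k=d_{m,r,k}+d_{m,s,k}$, so $r=s$ gives $d_{m,r,k}=\frac12\mu_k$ and $r=-s$ then gives the tautology $\mu_k=\mu_k$ --- no contradiction. What actually forces $\mu_k=0$ is substituting the resulting explicit form of $d_{m,r,k}$ into the $L$-covariance relations for $\delta(\cdot,G_r)$ coming from (\ref{2.111}) and (\ref{1.10}) (the paper's (\ref{c}), (\ref{e})); the coefficient $(b+\frac12)$ appearing there versus the coefficient $1$ in the Virasoro adjoint action is what produces the inconsistency. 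You do say these relations are fed into the system, so no ingredient is missing, but the contradiction should be attributed to them rather than to the $r=s$ versus $r=-s$ specializations.
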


 \begin{proof}
  Let $\delta$ be a symmetric biderivation $\delta: {\rm SVir} \times {\rm SVir} \rightarrow \mathfrak F_b$. Then $\delta$ can be regarded as a symmetric biderivation $\delta: {\rm Vir} \times {\rm Vir} \rightarrow \mathfrak F_b=\F_b+\F_{b+\frac12}$.

 Clearly, ${\rm BDer}({\rm Vir}, \mathfrak F_b)={\rm BDer}({\rm Vir}, \F_b)+{\rm BDer}({\rm Vir}, \F_{b+\frac12})$.
So by Theorem \ref{main}, we get
 \begin{align}\label{ab11}
 &&\delta(L_{m},L_{n})=\begin{cases}
\sum_{k\in \mathbb{Z}}\mu _{k}I_{m+n+k}, & if  \  b=0,\\
\sum_{k\in \mathbb{Z}}(m+n+k)\mu_{k}I_{m+n+k}, & if \ b=1,\\
\sum_{k\in \mathbb{Z}}\mu _{k}J_{m+n+k},& if \ b=-\frac{1}{2},\\
\sum_{k\in \mathbb{Z}}(m+n+k)\mu _{k}J_{m+n+k}, & if \ b=\frac12,\\
0 ,& otherwise.
\end{cases}
\end{align}
Suppose that
$$\delta(L_{m}, L_n)=\sum_{k\in\z}a_{m,n, k}I_{m+n+k}+\sum_{k\in\z}b_{m,n, k}J_{m+n+k},$$
 $$\delta(L_{m},G_{r})=\sum_{k\in\z}c_{m,r, k}I_{m+r+k}+\sum_{k\in\z}d_{m,r, k}J_{m+r+k}$$
 for some $a_{m, n, k}, b_{m, n, k}, c_{m, r, k}, d_{m, r, k} \in\mathbb C$ and $a_{m, n, k}, b_{m, n, k}$ are given in (\ref{ab11}).

\noindent{\bf Step 1.} First we only consider the case of $b=\pm\frac12$.  In this case $|\delta|=1$ and $a_{m, n, k}=0$ for all $m, n, k \in\z$. We shall prove that $b_{m, n, k}=0$ for all $m, n,k \in\z$.

From
$$\delta ([L_{m},L_{n}],G_{r})=L_{m}\cdot\delta(L_{n},G_{r})-L_{n}\cdot\delta(L_{m},G_{r}),$$
$$\delta (L_{m},[L_{n},G_{r}])=L_{n}\cdot\delta(L_{m},G_{r})+G_{r}\cdot\delta(L_{m},L_{n}),$$
we get
\begin{align}\label{b}
 (m-n)c_{m+n,r,k}=(bn+m+r+k)c_{m,r,k}-(bm+n+r+k)c_{n,r,k},
  \end{align}
\begin{align}\label{c} (m-n)d_{m+n,r,k}=((b+\frac{1}{2})n+m+r+k)d_{m,r,k}-((b+\frac{1}{2})m+n+r+k)d_{n,r,k},
\end{align}
\begin{align}\label{d}
(\frac{n}{2}-r)c_{m,n+r,k}=-(bn+m+r+k)c_{m,r,k}+2b_{m,n,k},
\end{align}
\begin{align}\label{e}
(\frac{n}{2}-r)d_{m,n+r,k}=-((b+\frac{1}{2})n+m+r+k)d_{m,r,k}.
\end{align}
Setting $n=0$ in \eqref{d},
we get  $(m+k)c_{m,r,k}=2b_{m,0,k}$ and then
\begin{equation} c_{m, r, k}=c_{m, 0, k}, \forall m, r, k\in\mathbb Z, m+k\ne0.\label{d00}\end{equation}
Setting $n=0$ in \eqref{b},
 we get
 \begin{align}\label{bb}
 (r+k)c_{m, r, k}=(r+bm+k)c_{0, r, k}.
 \end{align}

 Replacing $r$ by $r+1$ in \eqref{bb}, and combing with \eqref{bb} and \eqref{d00}, we get
 \begin{equation}c_{m,r,k}=c_{0, 0, k}, \forall m, r, k\in\mathbb Z, m+k\ne 0. \label{d01}\end{equation}

  Using \eqref{b}, we get $c_{m, r, k}=0$ for any $m, r, k\in\mathbb Z$. By \eqref{d} we get
$b_{m, n, k}=0$ for all $ m, n, k\in\mathbb Z$, and then
 $\delta(L_m, L_n)=0$ for all $m, n\in\mathbb Z$.

Now we can suppose that
 \begin{align}\label{dell}
\delta(L_{m},L_{n})=\begin{cases}
\sum_{k\in \mathbb{Z}}\mu _{k}I_{m+n+k}, & if  \  b=0,\\
\sum_{k\in \mathbb{Z}}(m+n+k)\mu_{k}I_{m+n+k}, & if \ b=1,\\
0 ,& otherwise.
\end{cases}
\end{align}

\noindent{\bf Step 2.} Next we shall prove that $\delta(L_{m},G_{r})=\delta(L_{m},L_{n})=0$ for all $m, n, r\in\mathbb Z$.

In this case \eqref{d} becomes
\begin{align}\label{d1}
(\frac{n}{2}-r)c_{m,n+r,k}=-(bn+m+r+k)c_{m,r,k}.
\end{align}
 Setting $m=n=0$ in \eqref{d1}, we get $c_{0,r,k}=0$ for all $k\ne 0$. So  $c_{m,r,k}=0$ for all $r+k\ne0, k\ne0$. Using \eqref{d1} again, we get
$c_{m,r,k}=0$ for all $k\ne0$.

 Setting $n=k=0$ in \eqref{d1}, we get $c_{m,r,0}=0$ for all $m\ne 0$. Combining with \eqref{bb}, we get $c_{m,r,0}=0$ for all $r\in \mathbb{Z}^*$. Using \eqref{d1} again we get $c_{m,r,0}=0$ for all $m, r\in\mathbb{Z}$.

So we get $\delta(L_{m},G_{r})=\sum_{k\in\mathbb{Z}}d_{m,r, k}J_{m+r+k}$ for all $m, n\in\mathbb{Z}$. In this case $|\delta|=0$.

From
$$\delta (L_{m},[L_{n},G_{r}])=L_{n}\cdot\delta(L_{m},G_{r})-G_{r}\cdot\delta(L_{m},L_{n}),$$
we get
\begin{align}\label{f} (\frac{n}{2}-r)d_{m,n+r,k}=-((b+\frac{1}{2})n+m+r+k)d_{m,r,k}+(br+\frac{m+n+k}{2})a_{m,n,k},
\end{align}
 where $a_{m, n, k}, m, n, k\in\mathbb Z$ are given by \eqref{dell}.

Setting $n=0$ in \eqref{f}, we get
\begin{align}\label{f11}
 (m+k)d_{m,r,k}=({br}+\frac{1}{2}(m+k))a_{m,0,k}.
 \end{align}

Taking $m=n=0$ in \eqref{f}, we get
 \begin{align}\label{f12} kd_{0,r,k}=(br+\frac{1}{2}k)a_{0,0,k}.
 \end{align}

Setting $n=0$ in \eqref{c}, we get
 \begin{align}\label{f13}
 (r+k)d_{m, r, k}=((b+\frac12)m+r+k)d_{0, r, k}.
 \end{align}

If $b=0$, then $a_{m, n, k}=\mu_k$ for all $m, n, k\in\mathbb{Z}$ by \eqref{dell}.  Combining with \eqref{f11}-\eqref{f13}, we get $d_{m, n, k}=0$ for all $m, n, k\in\mathbb{Z}$.

If $b=1$, then $a_{m, n, k}=(m+n+k)\mu_k$ for all $m, n, k\in\mathbb{Z}$ by \eqref{dell}.  Combining with \eqref{f11}-\eqref{f13}, we can also get $d_{m, n, k}=0$ for all $m, n, k\in\mathbb{Z}$.

If $b\ne0, 1$, it is clear that $d_{m, n, k}=0$ for all $m, n, k\in\mathbb{Z}$. In this case,
$d_{m,r,k}=a_{m,n,k}=\mu_{k}=0$  for all $m,n,r,k\in\mathbb{Z} $,  then we get
$$\delta(L_{m},G_{r})=\delta(L_{m},L_{n})=0, \forall m,n, r\in\mathbb{Z}.$$

\noindent{\bf Step 3.} We shall determine $\delta(G_{r},G_{s})$ with step $2$.

Suppose that
$\delta(G_{r},G_{s})=\sum_{k\in\z}e_{r,s,k}I_{s+r+k}$ for some $e_{ r,s,k}\in\mathbb C$.
From
$$\delta ([L_{m},G_{r}],G_{s})=L_{m}\cdot\delta(G_{r},G_{s})+G_{r}\cdot\delta(L_{m},G_{s}),$$
$$\delta (G_{r},[G_{s},G_{t}])=-G_{s}\cdot\delta(G_{r},G_{t})-G_{t}\cdot\delta(G_{r},G_{s}),$$
we get
\begin{align}\label{11}
 (\frac{m}{2}-r)e_{m+r,s,k}+(r+s+k+bm)e_{r,s,k}=0,
  \end{align}
\begin{align}\label{61}
 (\frac{r+t+k}{2}+bs)e_{r,t,k}+(\frac{r+s+k}{2}+bt)e_{r,s,k}=0.
 \end{align}
 Setting $m=0$ in \eqref{11}, we get $ (k+s)e_{r,s,k}=0$. Setting $s=t=0$ in \eqref{61}, we get $(r+k)e_{r,0,k}=0$, then $e_{r,s,k}=e_{r,0,k}=e_{0,0,k}=0$ with $k\neq 0$.
  If $k=0$, we have $e_{r,s,0}=e_{r,0,0}=0$.
So
$\delta(G_{r},G_{s})=0$ for all $r, s\in\mathbb{Z}$.

\noindent{\bf Step 4.} Finally we can determine $\delta(G_{r},C)$ with step $2$.

Suppose that
$\delta(G_{r}, C)=\sum_{k\in\mathbb{Z}}f_{r,k}I_{r+k}+\sum_{k\in\mathbb{Z}}g_{r,k}J_{r+k}$
 for some $f_{r,k}, g_{r,k} \in\mathbb C$. From
$$\delta ([C,L_0],G_r)=C\cdot\delta(L_{0},G_{r})-L_{0}\cdot\delta(C,G_{r}),$$
we get
$(r+k)f_{r,k}I_{r+k}+(r+k)g_{r,k}J_{r+k}=0$, then $f_{r,k}=g_{r,k}=0$ if $k\neq -r$. If $k=-r$, from
$\delta ([L_1,G_{r}],C)=L_{1}\cdot\delta(G_{r},C)-G_{r}\cdot\delta(L_{1},C)$,
we get $f_{r,-r}=g_{r,-r}=0$.
Then we get $\delta({\rm SVir}, C)=0$.
\end{proof}

\begin{rema}
By a similar calculation, we can get that Theorem \ref{main2} also holds for the super Virasoro algebra of the Neveu-Schwarz sector.
\end{rema}

\section{Biderivations on some Lie superalgebras}

In this section, we shall determine all symmetric biderivations on some Lie superalgebras with Theorem \ref{main} and Theorem \ref{main2}.

\subsection{The super Virasoro algebra}

\begin{theo}
 Every  symmetric super-biderivation of ${\rm SVir}$ is trivial.
\end{theo}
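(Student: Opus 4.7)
The plan is to reduce the claim to Theorem~\ref{main2} by passing to the center-quotient. First I would identify the SVir-module $\mathrm{SVir}/\mathbb C C$ with the density module $\mathfrak F_{-1}$: under $L_n\mapsto I_n$ and $G_n\mapsto J_n$, the four bracket types of SVir (modulo $C$) match the four product rules defining $\mathfrak F_b$ precisely when $b=-1$. Indeed, $[L_m,L_n]=(m-n)L_{m+n}$ agrees with $L_m\cdot I_n=-(n-m)I_{m+n}$; $[L_m,G_r]=(m/2-r)G_{m+r}$ with $L_m\cdot J_r=-(r-m/2)J_{m+r}$; $[G_r,L_n]=(r-n/2)G_{r+n}$ with $G_r\cdot I_n=-(n/2-r)J_{n+r}$; and $[G_r,G_s]=2L_{r+s}$ with $G_r\cdot J_s=2I_{r+s}$. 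All four identifications fall out of $b=-1$.

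Given any symmetric super-biderivation $\varphi\colon\mathrm{SVir}\times\mathrm{SVir}\to\mathrm{SVir}$, I would compose with the projection $\pi\colon\mathrm{SVir}\to\mathrm{SVir}/\mathbb C C\cong\mathfrak F_{-1}$. Since $\pi$ is an SVir-module homomorphism, $\pi\circ\varphi$ is a symmetric super-biderivation into $\mathfrak F_{-1}$, and Theorem~\ref{main2} (valid for every $b\in\mathbb C$) forces $\pi\circ\varphi=0$. Hence $\mathrm{Im}\,\varphi\subseteq\mathbb C C$.

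Since $\mathbb C C$ is a trivial SVir-module, both summands on the right-hand side of \eqref{1.10} vanish, so $\varphi(x,[y,z])=0$ for all $x,y,z\in\mathrm{SVir}$ (cf.\ Remark~\ref{center}). A direct check shows SVir is perfect: $[L_1,L_{-1}]=2L_0$ and $[L_m,L_n]=(m-n)L_{m+n}$ recover every $L_k$, $[L_0,G_r]=-rG_r$ together with $[L_1,G_{-1}]=\tfrac{3}{2}G_0$ recover every $G_r$, and $C$ is extracted from a suitable linear combination of $[L_2,L_{-2}]=4L_0+\tfrac12 C$ and $[L_1,L_{-1}]=2L_0$. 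Thus $[\mathrm{SVir},\mathrm{SVir}]=\mathrm{SVir}$, and therefore $\varphi\equiv 0$. The only nontrivial input is the module identification in the first paragraph; the reduction thereafter mirrors the passage from Theorem~\ref{main} to Corollary~\ref{vir}, and the whole argument carries no real obstacle once the quotient isomorphism is observed.
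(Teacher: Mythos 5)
Your proof is correct and takes essentially the same route as the paper, whose entire argument is the citation of Theorem~\ref{main2} together with Remark~\ref{center}; you have merely made explicit the identification $\mathrm{SVir}/\mathbb{C}C\cong\mathfrak{F}_{-1}$ and the perfectness of $\mathrm{SVir}$, both of which the paper leaves implicit (compare Corollary~\ref{vir}).
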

\begin{proof}
 It follows by Theorem \ref{main2} and Remark \ref{center}.
  \end{proof}
\begin{rema}
 All skew-symmetric biderivation on the super Virasoro algebra {\rm SVir} were determined in \cite{FD}.
\end{rema}

\subsection{The super  ${\rm W(2,2)}$ algebra}

In this subsection, we shall determine symmetric biderivations  of the super ${\rm W(2,2)}$ algebra ${\rm SW}$.

By definition, the  super ${\rm W(2,2)}$ algebra ${\rm SW}$ is a Lie superalgebra over $\cc$ with a basis $\{L_m, H_m, G_r, Q_{r},C_{1},C_{2}\mid m\in\mathbb{Z},r\in\mathbb{Z}+\frac{1}{2}\}$ and the following relations:
\begin{align*}
& [L_m,L_n]=(m-n)L_{n+m}+{1\over12}\delta_{m+n,0}(m^3-m)C_{1},\\
&[L_m, H_n]=(m-n)H_{m+n}+{1\over12}\delta_{m+n,0}(m^3-m)C_{2},\\
&[L_m,G_r]=(\frac{m}{2}-r)G_{m+r},\quad  [L_m,Q_r]=(\frac{m}{2}-r)Q_{m+r},\\
&[G_r,G_s]=2L_{r+s}+{1\over3}\delta_{r+s,0}(r^{2}-\frac{1}{4})C_{1},\\
&[G_r,Q_s]=2H_{r+s}+{1\over3}\delta_{r+s,0}(r^{2}-\frac{1}{4})C_{2},\\
&[H_m,G_r]=(\frac{m}{2}-r)Q_{m+r}, \quad  [x,C_{1}]=[x,C_{2}]=0
\end{align*}
for any $m,n\in\mathbb{Z}, r, s\in\mathbb{Z}+\frac{1}{2}, x\in {\rm SW}$.

${\rm SW}$  has the decomposition:
$SW=SW_{\bar0}\oplus SW_{\bar1},$
where$$
 SW_{\bar0}=\bigoplus_{n\in\z}\cc L_n\oplus\bigoplus_{n\in\z}\cc H_n\oplus\cc  C_{1}\oplus\cc  C_{2},\quad SW_{\bar1}=\bigoplus_{r\in\z+\frac12} \cc  G_r\oplus\bigoplus_{s\in\z+\frac12}\cc Q_{r}.$$
\begin{theo}
Every  symmetric super-biderivation of the super $W(2,2)$ algebra  ${\rm SW}$ is trivial.
\end{theo}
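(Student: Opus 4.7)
The plan is to reduce every pair of arguments to the setting of Theorem \ref{main2} (plus its Neveu--Schwarz analog from the remark after it), using a clean module decomposition of $\mathrm{SW}$. Set $\mathcal{S}:=\mathrm{span}\{L_m,G_r,C_1\mid m\in\mathbb{Z},\ r\in\mathbb{Z}+\tfrac12\}$, which is a copy of the Neveu--Schwarz super Virasoro algebra inside $\mathrm{SW}$. Comparing defining relations with the density-module action shows that, modulo the center $\mathbb{C}C_1\oplus\mathbb{C}C_2$, the adjoint action of $\mathcal{S}$ on $\mathrm{SW}$ splits as a direct sum of two copies of $\mathfrak{F}_{-1}^{\mathrm{NS}}$: one realized on $\mathrm{span}\{L_m,G_r\}$ (the adjoint modulo the center) and one on $\mathrm{span}\{H_m,Q_r\}$, both with parameter $b=-1$.

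Step 1 (vanishing on $\mathcal{S}\times\mathcal{S}$). The projection $\pi:\mathrm{SW}\twoheadrightarrow\mathrm{SW}/(\mathbb{C}C_1\oplus\mathbb{C}C_2)$ is an $\mathcal{S}$-module morphism, so $\pi\circ\delta|_{\mathcal{S}\times\mathcal{S}}$ is a symmetric super-biderivation from $\mathcal{S}$ into $\mathfrak{F}_{-1}^{\mathrm{NS}}\oplus\mathfrak{F}_{-1}^{\mathrm{NS}}$, which vanishes by the Neveu--Schwarz version of Theorem \ref{main2}. Hence $\delta(\mathcal{S},\mathcal{S})\subseteq\mathbb{C}C_1\oplus\mathbb{C}C_2$; since this is a trivial $\mathcal{S}$-submodule and $\mathcal{S}$ is perfect, Remark \ref{center} forces $\delta|_{\mathcal{S}\times\mathcal{S}}=0$.

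Step 2 (central entries and mixed pairs). Centrality of $C_i$ used in (\ref{2.111}) with $[C_i,y]=0$ gives $x\cdot\delta(C_i,y)=0$ for all $x,y\in\mathrm{SW}$, so $\delta(C_i,\mathrm{SW})\subseteq Z(\mathrm{SW})=\mathbb{C}C_1\oplus\mathbb{C}C_2$; perfectness of $\mathrm{SW}$ together with Remark \ref{center} then yields $\delta(C_i,\cdot)=0$. For $\delta(L_m,H_n)$, applying (\ref{1.10}) to $[L_p,H_q]$ and using $\delta(L_m,L_p)=\delta(L_m,C_2)=0$ produces the recurrence $(p-q)\,\delta(L_m,H_{p+q})=L_p\cdot\delta(L_m,H_q)$, while (\ref{2.111}) applied to $[L_m,L_p]$ gives a parallel recurrence in $(m,p)$. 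Writing $\delta(L_m,H_n)=\sum_k a_{m,n,k}L_{m+n+k}+\sum_k b_{m,n,k}H_{m+n+k}$, these recurrences coincide with the equations (\ref{3c})--(\ref{3d}) at parameter $b=-1$, so Case 3 in the proof of Theorem \ref{main} forces every coefficient to vanish; the mixed pairs $\delta(L_m,Q_r), \delta(G_r,H_n), \delta(G_r,Q_s)$ are treated identically using the corresponding density-module actions.

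Step 3 (pairs inside $\mathrm{span}\{H_m,Q_r\}$). Writing $H_{n}=\tfrac12[G_r,Q_{n-r}]$ modulo $C_2$ and $Q_{m+r}=(\tfrac{m}{2}-r)^{-1}[H_m,G_r]$ for suitable choices of indices, (\ref{1.10}) expresses $\delta(H_m,H_n), \delta(H_m,Q_s), \delta(Q_r,Q_s)$ entirely in terms of $\delta$-values already shown to vanish in Steps 1 and 2. I anticipate the main obstacle to be Step 2: although the recurrences have the same algebraic shape as those in Theorem \ref{main}, the central contributions from $[L_p,H_q]$ (a multiple of $C_2$) and $[L_p,L_q]$ (a multiple of $C_1$) must be carefully tracked, and one must verify that the relevant parameter does land in the vanishing case ($b=-1\ne 0,1$) rather than in the exceptional cases $b=0,1$ of Theorem \ref{main}.
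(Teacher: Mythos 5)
Your overall architecture is sound and genuinely different from the paper's: you package all pairs from $\mathcal S=\mathrm{span}\{L_m,G_r,C_1\}$ into a single application of the Neveu--Schwarz version of Theorem \ref{main2} via the decomposition $\mathrm{SW}/(\mathbb{C}C_1\oplus\mathbb{C}C_2)\cong\mathfrak F_{-1}\oplus\mathfrak F_{-1}$, whereas the paper cites Theorems \ref{main} and \ref{main2} wholesale and then disposes of the remaining pairs $(H,G)$, $(G,Q)$, $(Q,Q)$, $(H,Q)$, $(Q,C_i)$ by short ad hoc bracket computations. Your Step 1 and your treatment of the central entries are correct, and arguably cleaner than the paper's citations. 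However, there are two concrete gaps. First, in Step 2 the recurrences you obtain do \emph{not} coincide with \eqref{3c}--\eqref{3d} at $b=-1$: the analogue of \eqref{3d} reads $(n-p)\,\delta(L_m,H_{n+p})=L_n\cdot\delta(L_m,H_p)$, with the term corresponding to $(bp+m+n+k)a_{m,n,k}$ absent (precisely because $\delta(L_m,L_n)=0$), so Case 3 of the proof of Theorem \ref{main} cannot be invoked verbatim. The modified system is in fact more restrictive and does force vanishing --- setting $n=0$ immediately gives $(m+k)a_{m,p,k}=0$, and the surviving coefficients with $k=-m$ die upon substitution into the \eqref{3c}-type equation --- but this is a computation you must actually carry out, not a citation.

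Second, Step 3 is circular as written. Expanding $\delta(H_m,H_n)$ through $H_n=\tfrac12[G_r,Q_{n-r}]$ (mod $C_2$) via \eqref{1.10} leaves the term $G_r\cdot\delta(H_m,Q_{n-r})$, while expanding $\delta(H_m,Q_s)$ or $\delta(Q_r,Q_s)$ through $Q_s=(\tfrac p2-t)^{-1}[H_p,G_t]$ leaves $G_t\cdot\delta(H_m,H_p)$, respectively $G_t\cdot\delta(Q_r,H_p)$: each of the three Step 3 unknowns is expressed in terms of another one, not in terms of quantities already shown to vanish in Steps 1--2. This can be repaired, either by running the same recurrence argument as in Step 2 for these pairs (using \eqref{2.111} and \eqref{1.10} against $[L_p,H_q]$ and $[L_p,Q_t]$, whose $\delta$-partners you already control), or by the paper's device of pairing against $L_0$, which for $\delta(Q_r,Q_s)$ forces the output to sit in an $L_0$-weight space of weight $s\in\mathbb{Z}+\tfrac12$ inside the integer-graded even part and hence to vanish. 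Until Steps 2 and 3 are tightened in one of these ways, the proof is incomplete.
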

\begin{proof}
Let $\phi$ be a super-biderivation of the super $W(2,2)$.
From  Theorem \ref{main}, Theorem \ref{main2} and Remark \ref{center}, we know
 $\phi(L_{m},L_{n})=\phi(L_{m},H_{n})= \phi(H_m,H_{n})=0$, $\phi(L_{m},C_{1})
 =\phi(L_{m},C_{2})=0$, $\phi(H_{n},C_{1})=\phi(H_{n},C_{2})=0$, $\phi(G_{r},C_{1})=\phi(G_{r},C_{2})=0$,
 $\phi(L_m,G_r)=\phi(L_m,Q_r)= \phi(G_r,G_{s})=0$ for all  $m, n\in\mathbb{Z}, r, s\in\mathbb{Z}+\frac12$.
So we need to determine $\phi(H_{m},G_{r})$, $\phi(G_{r},Q_{s})$, $\phi(Q_{r},Q_{s})$, $\phi(H_{m},Q_{r})$, $\phi(Q_{r},C_{1})$ and $\phi(Q_{r},C_{2})$.

 Assume that
 $$\phi(H_m,G_r)=\sum_{\alpha\in\mathbb{Z}+\frac{1}{2}}a_{m,r,\alpha}^{(1)}Q_{\alpha}+
 \sum_{\beta\in\mathbb{Z}+\frac{1}{2}}b_{m,r,\beta}^{(1)}G_{\beta}$$
 for some $a_{m, r,\alpha}^{(1)},b_{m, r,\beta}^{(1)}\in\mathbb{C}.$
By
$$\phi(H_{m},[G_{r},G_{r}])=[\phi(H_{m},G_{r}), G_{r}]+(-1)^{(|\phi|+|H_{m}|)|G_{r}|}[G_{r},\phi(H_{m},G_{r})],$$
 we get
 $$ \sum_{\alpha\in\mathbb{Z}+\frac{1}{2}}a_{m,r,\alpha}^{(1)}H_{\alpha+r}
 +\sum_{\beta\in\mathbb{Z}+\frac{1}{2}}b_{m, r,\beta}^{(1)}L_{\beta+r}=0.$$
  Thus, $\phi(H_m,G_r)=0$.

By Remark \ref{center} we can assume that
$$ \phi(G_r,Q_{s})=\sum_{i\in\mathbb{Z}}a_{r,s,i}^{(2)}L_{i}+
 \sum_{j\in\mathbb{Z}}b_{r,s,j}^{(2)}H_{j}$$
 for some $a_{r,s,i}^{(2)},b_{r,s,j}^{(2)}\in\mathbb{C}.$
 By
  $$\phi(G_{r},[Q_{s},L_{0}])=[\phi(G_{r},Q_{s}), L_{0}]+(-1)^{(|\phi|+|G_{r}|)|Q_{s}|}[Q_{s},\phi(G_{r},L_{0})],$$
 we get
  $$    s\phi(G_r,Q_{s})=\sum_{i\in\mathbb{Z}}ia_{r,s,i}^{(2)}L_{i}+
 \sum_{j\in\mathbb{Z}}jb_{r,s,j}^{(2)}H_{j}.$$
 Thus,  $\phi(G_r,Q_{s})=0$.

 Assume that
$$ \phi(Q_r,Q_{s})=\sum_{i\in\mathbb{Z}}a_{r,s,i}^{(3)}L_{i}+
 \sum_{j\in\mathbb{Z}}b_{r,s,j}^{(3)}H_{j}$$
 for some $a_{r,s,i}^{(3)},b_{r,s,j}^{(3)}\in\mathbb{C}.$
 By
  $$\phi(Q_{r},[Q_{s},L_{0}])=[\phi(Q_{r},Q_{s}), L_{0}]+(-1)^{(|\phi|+|Q_{r}|)|Q_{s}|}[Q_{s},\phi(Q_{r},L_{0})],$$
 we get
  $$    s\phi(Q_r,Q_{s})=\sum_{i\in\mathbb{Z}}ia_{r,s,i}^{(3)}L_{i}+
 \sum_{j\in\mathbb{Z}}jb_{r,s,j}^{(3)}H_{j}.$$
 Thus,  $\phi(Q_r,Q_{s})=0$.

   Assume that
 $$\phi(H_m,Q_r)=\sum_{\alpha\in\mathbb{Z}+\frac{1}{2}}a_{m,r,\alpha}^{(4)}Q_{\alpha}+
 \sum_{\beta\in\mathbb{Z}+\frac{1}{2}}b_{m,r,\beta}^{(4)}G_{\beta}$$
 for some $a_{m,r,\alpha}^{(4)},b_{m,r,\beta}^{(4)}\in\mathbb{C}$.
By
$$\phi(H_{m},[Q_{r},G_{r}])=[\phi(H_{m},Q_{r}), G_{r}]+(-1)^{(|\phi|+|H_{m}|)|Q_{r}|}[Q_{r},\phi(H_{m},G_{r})],$$
 we get
 $$ \sum_{\alpha\in\mathbb{Z}+\frac{1}{2}}a_{m,r,\alpha}^{(4)}H_{\alpha+r}
 +\sum_{\beta\in\mathbb{Z}+\frac{1}{2}}b_{m,r,\beta}^{(4)}L_{\beta+r}=0.$$
  Thus,  $\phi(H_m,Q_r)=0$.

Assume that
$$\phi(Q_r,C_{1})=\sum_{\alpha\in\mathbb{Z}+\frac{1}{2}}a_{r,\alpha}G_{\alpha}
+\sum_{\beta\in\mathbb{Z}
+\frac{1}{2}}b_{r,\beta}Q_{\beta}.$$
From
$$\phi(Q_{r},[G_{s},C_{1}])=[\phi(Q_{r},G_{s}), C_{1}]+(-1)^{(|\phi|+|Q_{r}|)|G_{s}|}[G_{s},\phi(Q_{r},C_{1})],$$
 we get
 $$ \sum_{\alpha\in\mathbb{Z}+\frac{1}{2}}a_{r,\alpha}L_{\alpha+s}+\sum_{\beta\in\mathbb{Z}
 +\frac{1}{2}}b_{r,\beta}H_{\beta+s}=0.$$
  Thus, $\phi(Q_{r},C_{1})=0$.
 Similarly, we get
 $\phi(Q_{r},C_{2})=0$.
\end{proof}

\subsection{The N=1 super-$ {\rm BMS_3}$ algebra}

\begin{defi}
The {\rm N=1} super-${\rm BMS_3}$ algebra
$$\mathcal B=\bigoplus_{n\in\mathbb{Z}}\cc L_n\oplus\bigoplus_{n\in\mathbb{Z}}\cc W_n\oplus\bigoplus_{r\in\mathbb{Z}+\frac{1}{2}}\cc Q_r\oplus\cc C_1\oplus\cc C_2$$  is a Lie superalgebra with the following commutation relations:
\begin{align*}\label{brackets}
&[L_m, L_n]=(m-n)L_{m+n}+{1\over12}\delta_{m+n, 0}(m^3-m)C_1,\\
&[L_m, W_n]=(m-n)W_{m+n}+{1\over12}\delta_{m+n, 0}(m^3-m)C_2,\\
&[Q_r, Q_s]=2W_{r+s}+{1\over3}\delta_{r+s, 0}\left(r^2-\frac14\right)C_2,\\
&[L_m,  Q_r]= \left(\frac{m}{2}-r\right)Q_{m+r},\\
&[W_m, W_n]=[W_n,Q_r]=0, \quad [C_1,\mathcal B]=[C_2, \mathcal B]=0
\end{align*}
 for any $m, n\in\mathbb{Z}, r, s\in\mathbb{Z}+\frac12$.
\end{defi}
 The N=1 super-${\rm BMS_3}$ algebra $\mathcal B$ has the decomposition:
$\mathcal B=\mathcal B_{\bar0}\oplus\mathcal B_{\bar1},$
where
$$
\mathcal B_{\bar0}=\bigoplus_{n\in\mathbb{Z}}\cc L_n\oplus\bigoplus_{n\in\mathbb{Z}}\cc W_n\oplus\cc  C_1\oplus\cc  C_2,\quad \mathcal B_{\bar1}=\bigoplus_{r\in\mathbb{Z}+\frac12} \cc  Q_r.$$
\begin{theo}
 Every  symmetric super-biderivation of the N=1 super-${\rm BMS_3}$ algebra  $\mathcal B$ is trivial.
 \end{theo}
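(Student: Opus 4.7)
The plan is to imitate the template of the super $W(2,2)$ proof just completed: first eliminate as many pairs as possible using the triviality results of Sections 3 and 4, and then dispose of the remaining pairs by direct manipulation of the biderivation identities with $L_0$ and $L_{\pm 1}$.

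Let $\phi$ be a homogeneous symmetric super-biderivation of $\mathcal B$. Viewed as a ${\rm Vir}$-module under the adjoint action,
$$\mathcal B \;\cong\; {\rm Vir}\oplus\bigoplus_{n\in\z}\cc W_n\oplus\cc C_2\oplus\bigoplus_{r\in\z+\frac12}\cc Q_r,$$
with $\bigoplus_n\cc W_n\cong\F_{-1}$, $\bigoplus_r\cc Q_r$ the half-integer (Neveu-Schwarz) analog $\F^{\rm NS}_{-1/2}$, and each $\cc C_i$ a trivial summand. Since $-1$ and $-\tfrac12$ both avoid $\{0,1\}$, Theorem \ref{main} (together with its routine NS analog, of the kind noted in the remark after Theorem \ref{main2}), Corollary \ref{vir} and Remark \ref{center}, applied component by component to the image, annihilate $\phi(L_m,L_n)$, $\phi(L_m,W_n)$, $\phi(W_m,W_n)$ and $\phi(x,C_i)$ for every even $x$. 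With these vanishings, the residual biderivation identities for $\phi(L_m,Q_r)$ and $\phi(W_m,Q_r)$ reduce to biderivation equations from ${\rm Vir}$ into $\F^{\rm NS}_{-1/2}$, and the NS analog of Theorem \ref{main} kills both.

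For $\phi(Q_r,Q_s)$ the symmetry axiom gives $\phi(Q_r,Q_r)=-\phi(Q_r,Q_r)=0$ on the diagonal. For $r\ne s$ I apply (\ref{2.111}) to $\phi([Q_r,L_0],Q_s)$ and (\ref{1.10}) to $\phi(Q_r,[Q_s,L_0])$; using $\phi(L_0,Q_t)=0$ already proved, both right-hand sides collapse to $-L_0\cdot\phi(Q_r,Q_s)$, while the left-hand sides are $r\phi(Q_r,Q_s)$ and $s\phi(Q_r,Q_s)$. Thus $(r-s)\phi(Q_r,Q_s)=0$, forcing $\phi(Q_r,Q_s)=0$. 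For $\phi(Q_r,C_i)$, the relation $0=\phi(Q_r,[L_0,C_i])$ expanded via (\ref{1.10}) yields $L_0\cdot\phi(Q_r,C_i)=0$; when $|\phi|=\bar 0$ the $L_0$-kernel inside $\bigoplus_s\cc Q_s$ is zero (as $s\in\z+\tfrac12$ never vanishes), while when $|\phi|=\bar 1$ the residual $\cc L_0\oplus\cc W_0\oplus\cc C_1\oplus\cc C_2$ part is knocked out by pairing with $L_{\pm 1}$, exactly as in the $W(2,2)$ argument above.

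The main obstacle is supplying the Neveu-Schwarz analog of Theorem \ref{main} for $b=-\tfrac12$. The proof there is delicate, splitting into $b=0$, $b=1$, and $b\ne 0,1$, with careful attention to degenerate indices $k=-m$. Adapting it to half-integer indexing is formally routine---the crucial obstructions $b(b-1)a_{0,0,k}=0$ and $b(b-1)a_{1,0,0}=0$ are indifferent to whether the module index runs over $\z$ or $\z+\tfrac12$, and the special-case manipulations with $L_0,L_1$ still make sense---but this transcription must be carried out once explicitly. Once in hand, the steps above combine to give $\phi\equiv 0$.
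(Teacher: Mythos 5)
Your overall architecture matches the paper's (even--even pairs via the Virasoro density-module results, the remaining pairs by direct manipulation with $L_0$ and $L_{\pm1}$), and your $(r-s)\phi(Q_r,Q_s)=0$ argument combined with the diagonal identity $\phi(Q_r,Q_r)=-\phi(Q_r,Q_r)$ is a clean, parity-uniform improvement on the paper's coefficient comparison. But there is a genuine gap at the step where you dispose of $\phi(L_m,Q_r)$ and $\phi(W_m,Q_r)$: these do \emph{not} reduce to biderivation equations from ${\rm Vir}$ into the half-integer module. Once $\phi(L_m,L_n)=0$ is known, identity (\ref{1.10}) applied to $\phi(L_m,[L_n,Q_r])$ gives
\[
\bigl(\tfrac{n}{2}-r\bigr)\,\phi(L_m,Q_{n+r})=[L_n,\phi(L_m,Q_r)],
\]
so the second argument enters through a ${\rm Cent}$-type (module-homomorphism) condition, while only the first argument satisfies a derivation identity. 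That is a different functional system from the one classified in Theorem \ref{main}, so neither that theorem nor its Neveu--Schwarz analogue applies; it is exactly the system treated in Step 2 of the proof of Theorem \ref{main2} for $\delta(L_m,G_r)$, which is why the paper invokes Theorem \ref{main2} at this point. Moreover, when $|\phi|=\bar1$ the value $\phi(L_m,Q_r)$ lies in the \emph{even} part of $\mathcal B$, so the target is also misidentified. You need either to redo that Step-2 analysis inside $\mathcal B$, or to argue that for fixed $m$ the map $Q_r\mapsto\phi(L_m,Q_r)$ is a ${\rm Vir}$-module homomorphism from the irreducible module $\bigoplus_r\cc Q_r$ into $\mathcal B$ and then kill the resulting scalars using the first-slot identity; the same applies to $\phi(W_m,Q_r)$ (which, incidentally, the paper's own proof omits entirely).

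Two smaller points. First, the ``routine'' half-integer transcription of Theorem \ref{main} is not routine at the one place the paper substitutes $m=-k$ to obtain $b(b-1)a_{0,0,k}=0$: with $k\in\z+\frac12$ and $m\in\z$ that substitution is unavailable, and one must instead exploit the symmetry $a_{m,n,k}=a_{n,m,k}$ (which can be checked to force $a_{0,0,k}=0$ when $b=-\frac12$). Second, $\cc C_2$ is not a direct summand of the ${\rm Vir}$-module $\bigoplus_n\cc W_n\oplus\cc C_2$, since the extension determined by $[L_m,W_{-m}]=2mW_0+\frac{1}{12}(m^3-m)C_2$ does not split; ``component by component'' should be replaced by first passing to the quotient by the centre and then removing the residual central values via Remark \ref{center}. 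These two issues are repairable with the tools at hand; the $\phi(L_m,Q_r)$ and $\phi(W_m,Q_r)$ step is the one that needs an actual new argument.
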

\begin{proof} Let $\phi$ be a super-biderivation of $\mathcal B$.
 From  Corollary  \ref{3.5}, we have
 $ \phi(L_{m},L_{n}) = \phi(L_{m},W_{n})=\phi(W_{m},W_{n})=0$, $\phi(L_{m},C_{1}) =\phi(L_{m},C_{2})=0$,
  $\phi(W_{m},C_{1}) =\phi(W_{m},C_{2})=0$ for all $ m,n\in \mathbb{Z}$.
  By Theorem \ref{main2}, we get $\varphi(L_m, Q_r)= \varphi(Q_r,L_{m})=0$. So we need to determine $\phi(Q_{r}, Q_{s}) $, $\phi(Q_{r},C_{1})$ and $\phi(Q_{r},C_{2})$.

Assume that
$$\phi(Q_{r},Q_{s})=\sum_{i\in \mathbb{Z}} a_{r,s,i}L_{i} +\sum_{j\in\mathbb{Z}} b_{r, s,j}W_{j}$$
for some  $a_{r, s,i}, b_{r, s,j}\in \mathbb{C}$.
 By
$$ \phi(Q_{r},[Q_{s},L_{0}])=[\phi(Q_{r},Q_{s}), L_{0}]+(-1)^{(|\phi|+|Q_{r}|)|Q_{s}|}[Q_{s},\phi(Q_{r},L_{0})],$$
we get
$$ s \sum_{i \in\mathbb{Z}}a_{r,s,i} L_{i}+ s\sum_{j \in \mathbb{Z}}b_{r,s,j}W_{j}=\sum_{i \in\mathbb{Z}}ia_{r,s,i} L_{i}+\sum_{j \in \mathbb{Z}}jb_{r,s,j} W_{j}. $$
 Thus,
 $ \phi(Q_{r},Q_{s})=0$.

Assume that
$$\phi(Q_r,C_{1})=\sum_{\alpha\in\mathbb{Z}+\frac{1}{2}}a_{r,\alpha}Q_{\alpha}$$
for some  $a_{r,\alpha}\in \mathbb{C}$.
By
$$\phi(Q_{r},[Q_{s},C_{1}])=[\phi(Q_{r},Q_{s}), C_{1}]+(-1)^{(|\phi|+|Q_{r}|)|Q_{s}|}[Q_{s},\phi(Q_{r},C_{1})],$$
 we get
 $ \sum_{\alpha\in\mathbb{Z}+\frac{1}{2}}a_{r,\alpha}W_{\alpha+s}=0$.
 Thus,  $\phi(Q_{r},C_{1})=0$.
 Similarly,  we get  $\phi(Q_{r},C_{2})=0$.
 It completes the proof.
\end{proof}

\subsection{The $N=2$ superconformal algebra}

In this subsection, we shall determine all symmetric biderivations of the $N=2$ superconformal algebra.

By definition, the $N=2$ Ramond algebra ${\frak N}$ is a Lie superalgebra over $\cc$ with a basis $\{L_m, H_m, G_m^\pm, C\mid m\in\mathbb{Z}\}$ and the following relations:
\begin{align*}
& [L_m,L_n]=(m-n)L_{n+m}+{1\over12}(m^3-m)C, \\
&[H_m,H_n]={1\over3}m\delta_{m+n,0}C,\quad  [L_m, H_n]=-nH_{m+n},\\
&[L_m,G_p^\pm]=(\frac{m}{2}-p)G_{p+m}^\pm,\  [H_m,G_p^\pm]=\pm G_{m+p}^\pm,\\
&[G_p^+,G_q^-]=2L_{p+q}+(p-q)H_{p+q}+{1\over3}(p^2-{1\over4})\delta_{p+q,0}C,\\
&[G_p^\pm,G_q^\pm]=[\frak N, C]=0, \ \forall m,n, p, q\in\mathbb{Z}.
\end{align*}

The  $N=2$ Ramond algebra ${\frak N}$  has the decomposition:
$\frak N=\frak N_{\bar0}\oplus\frak N_{\bar1},$
where$$
\frak N_{\bar0}=\bigoplus_{n\in\mathbb{Z}}\cc L_n\oplus\bigoplus_{n\in\mathbb{Z}}\cc H_n\oplus\cc  C,\quad \frak N_{\bar1}=\bigoplus_{r\in\mathbb{Z}+\frac12} \cc  G_r^{+}\oplus\bigoplus_{s\in\mathbb{Z}+\frac12}\cc G_s^{-}.$$
\begin{theo}
Every  symmetric super-biderivation of  the  $N=2$ Ramond algebra ${\frak N}$ is trivial.
\end{theo}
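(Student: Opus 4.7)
The plan is to follow the same template as the preceding two subsections: fix a homogeneous symmetric super-biderivation $\phi$ of $\frak N$, and vanish each block $\phi(X, Y)$ in turn for $X, Y$ running through the generator types $\{L_m, H_m, G_p^+, G_p^-, C\}$, invoking Corollary~\ref{vir}, Theorem~\ref{main}, Theorem~\ref{main2} and Remark~\ref{center} wherever they apply and resorting to short bracket identities of the kind used for $\phi(Q_r, Q_s)$ in the super ${\rm W(2,2)}$ and super-${\rm BMS_3}$ proofs otherwise.

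For the even blocks I would first decompose $\frak N$ as a module over the Virasoro subalgebra $\bigoplus_n \cc L_n \oplus \cc C$: the $L$-sector is the adjoint Vir-module (an extension of $\F_{-1}$ by the trivial module $\cc C$), the $H$-sector is $\F_0$ (the bracket $[H_m, H_n]$ being central does not affect the Vir-module structure), and each $G^\pm$-sector is $\F_{-\frac{1}{2}}$. Applying Theorem~\ref{main} componentwise to $\phi(L_m, L_n)$, only the $H$-valued part can survive, with the shape $\sum_k \mu_k H_{m+n+k}$ when $|\phi|=\bar 0$; Remark~\ref{center} analogously forces $\phi(-,C) = 0$ and reduces $\phi(L_m, H_n)$ and $\phi(H_m, H_n)$ to similar residual forms. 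These residuals will be killed at a later step, once $\phi(L_m, G_p^\pm) = 0$ is in hand, by pairing with $[G_p^+, G_q^-] = 2L_{p+q} + (p-q)H_{p+q} + (\text{central})$ to force their coefficients against each other.

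For the mixed block $\phi(L_m, G_p^\pm)$, although $\frak N$ does not contain a sub-super-Virasoro (since $[G_p^\pm, G_q^\pm] = 0$ here), the functional equations arising from $[L_m, L_n]$ and $[L_m, G_p^\pm]$ are exactly those solved in Step~2 of the proof of Theorem~\ref{main2}, and the same recursion yields $\phi(L_m, G_p^\pm) = 0$ regardless of the unresolved residual in $\phi(L_m, L_n)$. Once this and the Heisenberg residuals have been eliminated, $\phi(G_p^\pm, G_q^\pm)$ vanishes by the $L_0$-eigenvalue trick (via $[G_q^\pm, L_0] = q G_q^\pm$ and $[G_p^\pm, G_q^\pm] = 0$) already used for $\phi(Q_r, Q_s)$, and $\phi(H_m, G_p^\pm)$ yields to a parallel argument using $[L_n, H_m]$ and $[L_n, G_p^\pm]$.

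The main obstacle I anticipate is $\phi(G_p^+, G_q^-)$: the bracket $[G_p^+, G_q^-] = 2L_{p+q} + (p-q)H_{p+q} + \frac{1}{3}(p^2 - \frac{1}{4})\delta_{p+q,0}C$ activates the $L$-, $H$-, and central sectors at once, so the already-established vanishing of $\phi(L_*, \cdot)$ and $\phi(H_*, \cdot)$ does not directly split the $L$- and $H$-components of the output. I would resolve this by expanding $\phi(G_p^+, G_q^-)$ in the basis of $\frak N$, diagonalizing under $\mathrm{ad}(L_0)$ to fix the index, then combining the biderivation identity with $[G_p^+, L_n]$ (which, given $\phi(L_n, G_q^-) = 0$, collapses to a Vir-module recursion of exactly the form treated in Theorem~\ref{main}) to force every coefficient to vanish separately in the $L$-, $H$- and $\cc C$-components. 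Finally, $\phi(G_p^\pm, C) = 0$ follows from the short bracket argument used for $\phi(Q_r, C_i)$ in the previous subsections, completing the proof that $\phi \equiv 0$.
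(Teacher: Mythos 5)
Your overall route coincides with the paper's: both first dispose of every block involving an $L_m$ by viewing $\frak N$ as a ${\rm Vir}$-module (adjoint $=$ extension of $\F_{-1}$ by $\cc C$, $H$-sector $\cong\F_0$, $G^\pm$-sectors $\cong\F_{-1/2}$) and invoking Theorem \ref{main}, Corollary \ref{3.5} and Theorem \ref{main2}, leaving only the residual $\phi(L_m,L_n)=\sum_k\mu_kH_{m+n+k}$; both then clear the blocks $\phi(H_m,G_p^\pm)$, $\phi(G_p^+,G_q^-)$, $\phi(G_p^\pm,C)$ by short bracket identities and finally kill $\mu_k$ by pairing the $L$--$L$ block against an odd generator (you via $[G_p^+,G_q^-]$, the paper via $[L_m,[G_p^+,L_n]]$; either works once $\phi(L_m,G_p^\pm)=0$ is known).

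The one step that fails as written is the claim that $\phi(G_p^\pm,G_q^\pm)$ ``vanishes by the $L_0$-eigenvalue trick already used for $\phi(Q_r,Q_s)$.'' That trick works for the super-${\rm BMS_3}$ algebra because there $s\in\z+\frac12$ can never equal the integer ${\rm ad}(L_0)$-eigenvalue of a basis vector $L_i$ or $W_j$ of the target. In the $N=2$ \emph{Ramond} algebra the odd indices are integers, so pairing with $[G_q^+,L_0]=qG_q^+$ only shows that $\phi(G_p^+,G_q^+)$ lies in the ${\rm ad}(L_0)$-weight space of weight $q$ (and, using the other slot, of weight $p$); for $p=q$ a residual $aL_p+bH_p$ (plus $\cc C$ when $p=0$) survives. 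The device that closes this --- and which the paper uses throughout for the odd--odd blocks --- is the ${\rm ad}(H_0)$-eigenvalue: $H_0$ acts by $\pm1$ on $G^\pm$ and by $0$ on all of $\frak N_{\bar0}$, so once $\phi(\cdot,H_0)=0$ is established the identity applied to $[G_q^\pm,H_0]=\mp G_q^\pm$ gives $\mp\phi(x,G_q^\pm)=\mp[H_0,\phi(x,G_q^\pm)]=0$ whenever $\phi(x,G_q^\pm)$ is even-valued; this kills $\phi(G_p^\pm,G_q^\pm)$ and $\phi(G_p^+,G_q^-)$ in one line and also shortens your treatment of $\phi(H_m,G_p^\pm)$ (first project away the $G^\mp$-component with ${\rm ad}(H_0)$, then use $[G_p^+,G_p^-]$). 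With that substitution, and with the minor caveat that your appeal to Remark \ref{center} for $\phi(\cdot,C)=0$ should really be the explicit $\delta(L_m,C)$-type computation from Theorem \ref{main}, your argument closes.
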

\begin{proof}  Let $\delta$ be a super-biderivation of $\frak N$.
From Corollary \ref{3.5}  and Theorem \ref{main2}, we get
$$\delta(L_m,L_n)=\sum_{k\in \mathbb{Z}}\mu_{k}H_{m+n+k},$$
$$\delta (L_m, H_n)=0,   \    \delta(H_m,H_n)=0,  \    \delta(G_p^\pm,G_q^\pm)=0,$$
$$  \delta(L_m,G_p^\pm)=0, \   \delta (L_m, C)=0,
  \    \delta(H_m,C)=0.$$
  So we need to determine
 $\delta(H_m,G_p^\pm) $, $\delta(G_p^+,G_q^-)$ and $\delta(G_p^\pm,C)$.

 Assume that
$$\delta(H_{m},G_p^+)=\sum_{i\in \mathbb{Z}} a_{m, p,i}^{(1)}G_i^++\sum_{j\in \mathbb{Z}} b_{m,p,j}^{(1)}G_j^-,$$
$$\delta(H_{m},G_p^-)=\sum_{i\in \mathbb{Z}} a_{m, p,i}^{(2)}G_i^++\sum_{j\in \mathbb{Z}} b_{m,p,j}^{(2)}G_j^-,$$
 where $a_{m,p,i}^{(1)}, b_{m,p,j}^{(1)},a_{m,p,i}^{(2)}, b_{m,p,j}^{(2)}\in \mathbb{C}$. By
$$ \delta(H_{m},[G_{p}^{+},H_0])=[\delta(H_{m},G_{p}^{+}), H_{0}]+(-1)^{(|\delta|+|H_{m}|)|G_{p}^{+}|}[G_{p}^{+},\delta(H_{m},H_{0})],$$
we get
$\sum_{j\in \mathbb{Z}} b_{m,p,j}^{(1)}G_{j+p}^-=0$.
 Thus,
 $ \delta(H_{m},G_p^+)=\sum_{i\in \mathbb{Z}} a_{m,p,i}^{(1)}G_i^+$.
 Similarly, we get
$ \delta(H_{m},G_p^-)=\sum_{j\in \mathbb{Z}} b_{m,p,i}^{(2)}G_i^-$.
  By
$$ \delta(H_{m},[G_{p}^{+}, G_{p}^{-}])=[\delta(H_{m},G_{p}^{+}),G_{p}^{-}]+
(-1)^{(|\delta|+|L_{m}|)|G_{p}^{+}|}[G_{p}^{+},\delta(H_{m},G_{p}^{-})],$$
we get  $\delta(H_{m},G_{p}^{\pm})=0$.

Assume that
$$\delta(G_p^+,G_q^-)=\sum_{i\in\mathbb{Z}}a_{p,q,i}^{(3)}L_{i}
+\sum_{j\in\mathbb{Z}}b_{p,q,j}^{(3)}H_{j},$$
 where $a_{p,q,i}^{(3)}, b_{p,q,j}^{(3)}\in \mathbb{C}$.
  By
$$ \delta(G_{p}^{+},[G_{p}^{-},H_{0}])=[\delta(G_{p}^{+},G_{q}^{-}), H_{0}]+(-1)^{(|\delta|+|G_{p}^{+}|)|G_{p}^{-}|}[G_{p}^{-},\delta(G_{p}^{+},H_{0})],$$
we get
$$\sum_{i\in\mathbb{Z}}a_{p,q,i}^{(3)}L_{i}
+\sum_{j\in\mathbb{Z}}b_{p,q,j}^{(3)}H_{j}=0.$$
Thus, $\delta(G_p^+,G_q^-)=0$.

By
$$ \delta(L_{m},[G_{p}^{+},L_{n}])=[\delta(L_{m},G_{p}^{+}), L_{n}]+(-1)^{(|\delta|+|L_{m}|)|G_{p}^{+}|}[G_{p}^{+},\delta(L_{m},L_{n})],$$
we get
 $$\sum_{k\in\mathbb{Z}}\mu_{k}G_{m+n+p+k}=0.$$
 Thus, $\mu_{k}=0$.

Assume that
$$\delta(G_p^{\pm},C)=\sum_{i\in \mathbb{Z}} a_{p,i}G_i^++\sum_{j\in \mathbb{Z}} b_{p, j}G_j^-,$$
 where $a_{p, i}, b_{p, j}\in \mathbb{C}$.
  By
$$ \delta(G_p^{\pm},[H_{m},C])=[\delta(G_{p}^{\pm},H_{m}),C]
+(-1)^{(|\delta|+|G_{p}^{\pm}|)|H_{m}|}[H_{m},\delta(G_{p}^{\pm},C)],$$
we get
$$\sum_{i\in\mathbb{Z}}a_{p,i}G_{m+i}^{+}-\sum_{j \in\mathbb{Z}}b_{p,j}G_{m+j}^{-}=0.$$
Thus, $\delta(G_p^{\pm},C)=0$.
It completes the proof.
\end{proof}

\subsection{The Heisenberg-Virasoro superalgebra}

In this subsection, we shall determine all symmetric biderivations of the Heisenberg-Virasoro superalgebra $S$.
 \begin{defi}
        The Heisenberg-Virasoro superalgebra $S$ is a Lie superalgebra whose even part $S_{\overline{0}}$ is spanned by $ \{ L_{m}, H_{m},C\mid m\in \mathbb{Z}\}$ and odd part $ S_{\overline{1}}$ is spanned by $\{G_{r} \mid  r\in \mathbb{Z}+\frac{1}{2}\}$, equipped with the following relations:
       \begin{align*}
        &[L_{m}, L_{n}]= (m-n)L_{m+n}+\frac{1}{12}\delta_{m+n, 0}(m^{3}-m)C,\\
       & [L_{m}, H_{n}]= -nH_{m+n},   \quad   [L_{m}, G_{r}]=-rG_{m+r},\\
      & [G_{r}, G_{s}]=2H_{r+s}, \, [S, C]=0, \forall  m, n \in \mathbb{Z}, r, s\in \mathbb{Z}+\frac{1}{2}.
       \end{align*}
\end{defi}

\begin{theo}\label{5.8}
Every symmetric super-biderivation of the Heisenberg-Virasoro superalgebra $S$ is as follows:
$$ \phi(L_{m},L_{n}) =\sum_{k\in \mathbb{Z}}\mu_{k}H_{m+n+k}, $$
for all  $m,n\in \mathbb{Z}$, where  $\mu_{k} \in \mathbb{C }$, the others are zero.
\end{theo}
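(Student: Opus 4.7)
The plan is to split $\phi=\phi_{\bar{0}}+\phi_{\bar{1}}$ by parity and treat each homogeneous component separately, in the spirit of the earlier subsections. For $|\phi|=\bar{0}$, the restriction $\phi|_{S_{\bar{0}}\times S_{\bar{0}}}$ is an ordinary symmetric biderivation of $W(0,0)\cong S_{\bar{0}}$, so Corollary \ref{3.5} with $b=0$ immediately gives $\phi(L_m,L_n)=\sum_k\mu_k H_{m+n+k}$ together with the vanishing of every other even-even value and of $\phi(S_{\bar{0}},C)$. To kill the values involving odd arguments I would exploit the key relation $[G_r,G_s]=2H_{r+s}$: since $\phi(L_m,[G_r,G_s])=2\phi(L_m,H_{r+s})=0$, the identity \eqref{1.10} reduces to $G_r\cdot\phi(L_m,G_s)+G_s\cdot\phi(L_m,G_r)=0$, and taking $r=s$ together with $G_r\cdot G_t=2H_{r+t}$ forces $\phi(L_m,G_r)=0$; the same trick yields $\phi(H_m,G_r)=0$. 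For $\phi(G_r,G_s)\in S_{\bar{0}}$ I would use $\phi([L_0,G_r],G_s)=-r\phi(G_r,G_s)$, which via \eqref{2.111} and $\phi(L_0,G_s)=0$ becomes $L_0\cdot\phi(G_r,G_s)=-r\phi(G_r,G_s)$; since $L_0$ has integer spectrum on $S_{\bar{0}}$ while $r\in\mathbb{Z}+\tfrac{1}{2}$, this forces $\phi(G_r,G_s)=0$. The values $\phi(\cdot,C)$ are disposed of by a standard centrality argument.

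For $|\phi|=\bar{1}$ the analysis is more delicate, because $\phi(L_m,L_n)$ now lives in $S_{\bar{1}}=\bigoplus_{r\in\mathbb{Z}+1/2}\mathbb{C}G_r$, a $b=0$-type Vir-density module with half-integer grading. Replicating Case 1 of Theorem \ref{main} (whose derivation is insensitive to whether the module indices are integers or half-integers) yields $\phi_{\bar{1}}(L_m,L_n)=\sum_{k\in\mathbb{Z}+1/2}\mu_k G_{m+n+k}$, and the goal is to annihilate every $\mu_k$. I would first show $\phi(L_m,H_n)=0$: writing $\phi(L_m,H_n)=\sum_r c_{m,n,r}G_r$, the identity \eqref{2.111} applied to $[L_{m'},L_m]$ gives $c_{m,n,r}=c_{0,n,r-m}$, and then \eqref{1.10} applied to $[L_n,H_p]=-pH_{n+p}$ (using $H_p\cdot G_r=0$) produces a relation whose right-hand side depends affinely on the auxiliary index $m'$, forcing all $c$'s to vanish. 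Next, \eqref{1.10} applied to $[G_r,G_s]$ together with $[G_r,L_i]=rG_{r+i}$ kills the $L$- and $C$-components of $\phi(L_m,G_r)\in S_{\bar{0}}$, and $\phi(L_m,[L_n,G_r])=-r\phi(L_m,G_{n+r})$ pins down the $H$-components as $B^m_{r,q}=2\mu_{q-m-r}/(q-r)$. The decisive step is feeding these expressions into the compatibility relation obtained from $\phi([L_m,L_n],G_p)$ via \eqref{2.111}: after simplification one gets, whenever $\mu_{q-m-n-p}\neq 0$, the quadratic identity
\[
a^2+(p-m-n)\,a+mn=0,\qquad a=q-p,
\]
which at $(m,n,p)=(1,0,0)$ reads $(\kappa+1)(\kappa+p)=0$ with $\kappa=q-1\in\mathbb{Z}+\tfrac{1}{2}$. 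Since $\kappa\neq-1$ and $p$ is arbitrary, this is a contradiction, so every $\mu_k=0$ and hence $\phi_{\bar{1}}(L_m,L_n)=\phi_{\bar{1}}(L_m,G_r)=0$. The remaining odd-case values $\phi_{\bar{1}}(H_m,H_n)$, $\phi_{\bar{1}}(H_m,G_r)$, $\phi_{\bar{1}}(G_r,G_s)$ and $\phi_{\bar{1}}(\cdot,C)$ are then killed by the same $L_0$-eigenvalue and centrality recipes, exploiting the integer-vs-half-integer grading mismatch.

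The main obstacle is the $|\phi|=\bar{1}$ subcase, where the vanishing of $\phi_{\bar{1}}(L_m,L_n)$ requires coordinating three separate biderivation identities (those arising from $[L_m,L_n]$, $[L_n,G_r]$, and $[G_r,G_s]$) into a single polynomial incompatibility on the half-integer lattice. No shortcut via Theorem \ref{main2} is available, because $[G_r,G_s]=2H_{r+s}$ lands in the Heisenberg part rather than in Vir, so $S$ contains no sub-superalgebra isomorphic to {\rm SVir}.
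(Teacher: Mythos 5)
Your proposal is correct and in fact goes further than the paper's own argument. For the even component your route and the paper's are close in spirit: both reduce the even--even values to Corollary \ref{3.5} with $b=0$ (the paper outsources this step to an external theorem on the twisted Heisenberg--Virasoro algebra, but $S_{\bar 0}\cong W(0,0)$, so it is the same statement), and both kill the values involving $G_r$ by combining ${\rm ad}\,L_0$-eigenvalue arguments (integer versus half-integer spectrum) with the relation $[G_r,G_s]=2H_{r+s}$. The details differ slightly: you take $r=s$ in the identity for $\phi(L_m,[G_r,G_s])$ to kill $\phi(L_m,G_r)$ outright, while the paper first reduces $\phi(L_m,G_r)$ to a multiple of $G_r$ via $L_0$ and then invokes $\phi(G_r,G_s)=-\phi(G_s,G_r)$; both work. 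The substantive difference is that the paper never treats the odd component $\phi_{\bar 1}$: every ansatz it writes (e.g.\ $\phi(L_m,G_r)\in\bigoplus\mathbb{C}G_\alpha$, $\phi(G_r,G_s)\in S_{\bar 0}$) presupposes $|\phi|=\bar 0$, and its appeal to Theorem \ref{main2} is not really available since, as you correctly observe, $S$ contains no copy of ${\rm SVir}$. Your odd-case analysis fills this hole: the half-integer analogue of Case 1 of Theorem \ref{main} does leave $\phi_{\bar 1}(L_m,L_n)=\sum_k\mu_kG_{m+n+k}$ undetermined (the specialization $m=-k$ used in the paper is vacuous on the half-integer lattice), the formula $B^m_{r,q}=2\mu_{q-m-r}/(q-r)$ for the $H$-components of $\phi(L_m,G_r)$ is forced by $\phi(L_m,[L_0,G_r])$, and substituting it into the $H$-component of the identity for $\phi([L_m,L_n],G_p)$ yields $(a-m)(a-n)=-pa$ with $a=k_0+m+n$ whenever $\mu_{k_0}\ne0$, whence $(k_0+1)(k_0+p)=0$ at $(m,n)=(1,0)$ for every admissible $p$, a contradiction since $k_0\in\mathbb{Z}+\frac12$. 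I checked this computation and it is right.

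Two small points to tighten. First, $p$ ranges over $\mathbb{Z}+\frac12$, so the specialization ``$(m,n,p)=(1,0,0)$'' is not literally available; what you actually use (and what works) is $(m,n)=(1,0)$ with $p$ arbitrary. Second, in the odd endgame the ``integer-versus-half-integer mismatch'' does not by itself dispose of $\phi_{\bar 1}(G_r,G_s)$: its value lies in $S_{\bar 1}$, whose ${\rm ad}\,L_0$-spectrum is also half-integral, so the two eigenvalue relations $[L_0,\phi(G_r,G_s)]=-r\phi(G_r,G_s)=-s\phi(G_r,G_s)$ settle only the case $r\ne s$, and $\phi_{\bar 1}(G_r,G_r)\in\mathbb{C}G_r$ needs one further relation (for instance $\phi(G_r,[G_r,G_r])=2\phi(G_r,H_{2r})=0$ together with the already-established vanishing of $\phi_{\bar 1}(G_r,H_n)$, which gives $[G_r,\phi(G_r,G_r)]=0$ and hence $\phi(G_r,G_r)=0$). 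These are routine repairs with the tools you already have.
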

\begin{proof} Let $\phi$ be a super-biderivation of $S$.
From [12, Theorem 3.2],  we get
$$\phi(L_m,L_n)=\sum_{k\in \mathbb{Z}}\mu_{k}H_{m+n+k},$$
$$\phi (L_m, H_n)=0,   \    \    \phi(H_{m},H_n)=0,$$
$$\phi (L_m, C)=\phi(H_{m},C)=0.$$
Next,  we  need to  determine $\phi(G_r,G_s)$, $\phi(L_m,G_r) $, $\phi(H_m,G_r) $ and $\phi(G_{r},C)$.
 By Theorem \ref{main2}, we can assume that
  $$\phi(G_r,G_s)=\sum_{i\in\mathbb{Z}}a_{r, s, i}^{(1)}L_{i}+\sum_{j\in \mathbb{Z}} b_{r,s,j}^{(1)}H_j,$$
   $$\phi(L_m,G_r)= \sum_{\alpha\in\mathbb{Z}+\frac{1}{2}}a_{m,r,\alpha}^{(2)}G_{\alpha}, \,
   \phi(H_m,G_r)= \sum_{\beta\in\mathbb{Z}+\frac{1}{2}}b_{m,r,\beta}^{(2)}G_{\beta},$$
   $$ \phi(G_r,C)=\sum_{\alpha\in \mathbb{Z}+\frac{1}{2}} a_{r,\alpha}G_\alpha$$
 for some $ a_{r,s,i}^{(1)},b_{r,s,j}^{(1)},a_{m,r,\alpha}^{(2)},b_{m,r,\beta}^{(2)},a_{r,\alpha}\in \mathbb{C}$.
By
$$ \phi(L_{m},[L_{0},G_{r}])=[\phi(L_{m},L_{0}), G_{r}]+(-1)^{(|\phi|+|L_{m}|)|L_{0}|}[L_{0},\phi(L_{m},G_{r})],$$
we get
$ r\sum_{\alpha \in\mathbb{Z}+\frac{1}{2}}a_{m,r,\alpha}^{(2)}G_{\alpha} =
 \sum_{\alpha\in\mathbb{Z}+\frac{1}{2}}a_{m,r,\alpha}^{(2)}\alpha G_{\alpha}$,
  then
   $a_{m,r,\alpha}^{(2)}=0$  for $ \alpha\neq r$.
    Thus,  we have
    \begin{equation}\label{bi-1}
    \phi(L_m,G_{r})=a_{m,r,r}^{(2)}G_{r}.
    \end{equation}
 By
$$ \phi(G_{r},[G_{s},L_{0}])=[\phi(G_{r},G_{s}),L_{0}]
+(-1)^{(|\phi|+|G_{r}|)|G_{s}|}[G_s,\phi(G_{r},L_{0})],$$
we get
 $$ s\sum_{i\in\mathbb{Z}}a_{r, s,i}^{(1)}L_{i}+s\sum_{j\in\mathbb{Z}}b_{r, s,j}^{(1)}H_{j}
 =\sum_{i\in\mathbb{Z}}ia_{r, s, i}^{(1)}L_{i}+\sum_{j\in\mathbb{Z}}jb_{r, s, j}^{(1)} H_{j}-a_{0, r,r}^{(2)}2H_{r+s}.$$
 Then we have $a_{r, s, i}^{(1)}=0$ and $b_{r, s, j}^{(1)}= 0$ for $j \neq r+s$. Thus, $\phi(G_r,G_{s}) =b_{r, s, r+s}^{(1)}H_{r+s}$. For
 $j=r+s$, we have $r b_{r, s, r+s}^{(1)}=2a_{0,r, r}^{(2)}$ for all $r, s\in\mathbb{Z}+\frac{1}{2}$.  By $\phi(G_r,G_{s})=-\phi(G_s,G_{r})$,  we have
  $a_{0,r, r}^{(2)}=0$ for all $r \in\mathbb{Z}+\frac{1}{2}$.  So $b_{r, s, r+s}^{(1)}=0$ for all $r, s\in\mathbb{Z}+\frac{1}{2}$.  Hence,
  $$ \phi(G_r,G_{s})=0, \forall r,s\in\mathbb{Z}+\frac{1}{2}.$$

 By $ \phi(G_{r},[G_{s},H_n])=[\phi(G_{r}, G_{s}),H_n]+(-1)^{(|\phi|+|G_{r}|)|G_{r}|}[G_s,\phi(G_{r},H_{n})]$,
we get
$ \sum_{\beta\in\mathbb{Z}+\frac{1}{2}}b_{n, r,\beta}^{(2)}=0$.
Then $$\phi(H_m,G_{r})=0, \forall m\in\mathbb{Z}, r\in\mathbb{Z}+\frac{1}{2}.$$

 By $ \phi(G_{r},[L_{m},G_{s}])=[\phi(G_{r},L_{m}), G_{s}]+(-1)^{(|\phi|+|G_{r}|)|L_{m}|}[L_{m},\phi(G_{r},G_{s})]$,
we get $a_{m, r, r}^{(2)}=0$. Combining with \eqref{bi-1}, we have
 $$\phi(L_m,G_{r})=0, \forall m\in\mathbb{Z}, r\in\mathbb{Z}+\frac{1}{2}.$$

By $\phi(G_{r},[G_{r},C])=[\phi(G_{r},G_{r}), C]+(-1)^{(|\phi|
  +|G_{r}|)|G_{r}|}[G_{r},\phi(G_{r},C)]$,
we get
$\sum_{\alpha\in \mathbb{Z}+\frac{1}{2}} 2a_{r,\alpha}H_{\alpha+r}=0$.
  Thus, we get
 $$\phi(G_{r},C)=0, \forall r\in\mathbb{Z}+\frac{1}{2}.$$
It completes the proof.
\end{proof}

\section{Post-Lie superalgebra structures on some Lie superalgebras}

 In this section, we shall study post-Lie superalgebra structures on  Lie superalgebras mentioned above.

\begin{defi}\label{66} Let $(L,[,])$ be a Lie superalgebra. A commutative post-Lie superalgebra structure on $L$ is a bilinear product $x\circ y$ on $L$ satisfying the following identities:
 \begin{align*}
& x\circ y =(-1)^{|x||y|}y\circ x , \\
 &   [x,y]\circ z=x\circ(y\circ z)-(-1)^{|x||y|}y\circ(x\circ z), \\
  &   x\circ [y,z]= [x\circ y,z]+(-1)^{|x||y|}[y,x\circ z], \ \forall x,y,z\in L.
   \end{align*}
    \end{defi}
 We also say $(L,[,],\circ)$ a commutative post-Lie superalgebra. A post-Lie superalgebra
  $(L, [,], \circ)$ is said to be trivial if $x\circ y=0$ for all $x, y \in L$.

 The following lemma shows the connection between commutative post-Lie superalgebras and symmertric biderivations of $L$.
  \begin{lemm}\label{6.2}
  Suppose that $(L,[,],\circ)$ is commutative post-Lie superalgebra. If we define a bilinear map $ f: L\times L\rightarrow L$ by $f(x,y)=x\circ y$ for all $x,y\in L$, then $f$ is a symmetric biderivation of $L$.
  \end{lemm}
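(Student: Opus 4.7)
The plan is to check, for $f(x,y):=x\circ y$, the four parts of the definition of a symmetric super-biderivation of $L$, where the $L$-module structure on $M=L$ is the adjoint action $x\cdot y=[x,y]$.

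First I would note that the bilinear product $\circ$ on the $\mathbb{Z}_2$-graded space $L$ preserves the grading, so $f$ is homogeneous of degree $\overline{0}$. The supersymmetry $f(x,y)=(-1)^{|x||y|}f(y,x)$ is then immediate from the first identity in Definition \ref{66}.

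Next I would verify the biderivation identity \eqref{1.10}. With $|\varphi|=\overline{0}$ and $\varphi=f$, its right-hand side reads $(-1)^{|x||y|}[y,f(x,z)]-(-1)^{|z|(|x|+|y|)}[z,f(x,y)]$, and applying antisymmetry of the Lie bracket to the last term rewrites it as $(-1)^{|x||y|}[y,f(x,z)]+[f(x,y),z]$. Thus \eqref{1.10} is exactly the third identity in Definition \ref{66} applied at $(x,y,z)$, and so holds at once.

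Finally, for identity \eqref{2.111} I would not redo the check from scratch; instead I would use that, for a symmetric bilinear map of degree $\overline{0}$ valued in an adjoint-type module, \eqref{2.111} applied at $(x,y,z)$ is equivalent to \eqref{1.10} applied at $(z,x,y)$. Concretely, supersymmetry of $f$ gives $f([x,y],z)=(-1)^{(|x|+|y|)|z|}f(z,[x,y])$; one then substitutes the already-verified \eqref{1.10} for $f(z,[x,y])$ and reapplies the supersymmetry of $f$ together with antisymmetry of the bracket. The $\mathbb{Z}_2$-signs collapse (using $2ab\equiv 0\pmod 2$) precisely to the right-hand side of \eqref{2.111}. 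The only genuine obstacle is this last sign-bookkeeping; conceptually the argument invokes only the first and third identities of Definition \ref{66}, and the second identity is not needed.
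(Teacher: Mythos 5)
Your proof is correct and follows essentially the same route as the paper: both verify identity \eqref{1.10} directly from the third post-Lie axiom (after moving the sign via super-antisymmetry of the bracket), and both obtain \eqref{2.111} by first flipping $f([x,y],z)$ to $(-1)^{|z|(|x|+|y|)}f(z,[x,y])$ via commutativity of $\circ$ and then applying the same third axiom; neither argument uses the second post-Lie identity. Your sign bookkeeping checks out, so no changes are needed.
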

  \begin{proof}
  For any $x, y, z \in L$,
   by Definition \ref{66}, we deduce that
 \begin{align*}
f([x,y],z)&=[x,y]\circ z=(-1)^{|z|(|x|+|y|)}z\circ [x,y]\\
 &=(-1)^{|z|(|x|+|y|)}([z\circ x, y]+(-1)^{|x||z|}[x,z\circ y])\\
 &=(-1)^{|z||y|}([x\circ z,y]+(-1)^{|z||y|}[x, z\circ y])\\
 &= (-1)^{|z||y|}[f(x, z),y]+[x, f(y, z)],
   \end{align*}
\begin{align*}
   f(x,[y, z])&=x\circ [y, z]=[x\circ y, z]+(-1)^{|x||y|}[y, x\circ z]\\
  &= [f(x, y),z]+(-1)^{|x||y|}[y, f(x, y)]. \\
           \end{align*}
           So $f$ is a symmetric  biderivation of $L$.

  \end{proof}
  \begin{theo}
  Any commutative post-Lie superalgebra structure on the super Virasoro algebra ${\rm SVir}$,  the super ${\rm W(2,2)}$ algebra ${\rm SW}$,
  the ${\rm N}=1$ super-${\rm BMS_3}$ algebra ${\mathcal B}$, the ${\rm N}=2$  superconformal algebra ${\frak N}$ and the Heisenberg-Virasoro superalgebra $S$  is trivial.
  \end{theo}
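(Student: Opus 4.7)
The plan is to reduce the claim to the symmetric biderivation classifications established earlier, via Lemma~\ref{6.2}. Let $(L,[,],\circ)$ be a commutative post-Lie superalgebra structure on one of the five Lie superalgebras in the statement. By Lemma~\ref{6.2}, the bilinear map $f(x,y):=x\circ y$ is a symmetric super-biderivation of $L$, so every vanishing result for symmetric super-biderivations transfers directly to constraints on $\circ$.

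For the super Virasoro algebra ${\rm SVir}$, the super $W(2,2)$ algebra ${\rm SW}$, the $N=1$ super-${\rm BMS_3}$ algebra $\mathcal{B}$, and the $N=2$ Ramond algebra $\frak N$, the relevant theorems in subsections 5.1--5.4 have already shown that every symmetric super-biderivation is trivial. Hence $x\circ y=f(x,y)=0$ identically, and the post-Lie structure is trivial on each of these four algebras.

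The remaining case is the Heisenberg-Virasoro superalgebra $S$, which is genuinely nontrivial: Theorem~\ref{5.8} only forces $f$ to have the form $f(L_m,L_n)=\sum_{k\in\mathbb{Z}}\mu_k H_{m+n+k}$ while vanishing on all other pairs of basis vectors, and the biderivation identities alone do not kill the $\mu_k$. To finish, I would invoke the left post-Lie axiom
$$[x,y]\circ z=x\circ(y\circ z)-(-1)^{|x||y|}\,y\circ(x\circ z)$$
with $x=L_m$, $y=L_n$, $z=L_p$. Using $[L_m,L_n]=(m-n)L_{m+n}+\tfrac{1}{12}\delta_{m+n,0}(m^3-m)C$ together with $C\circ L_p=0$, the left-hand side collapses to $(m-n)\sum_k\mu_k H_{m+n+p+k}$, while the right-hand side vanishes because $L_i\circ H_j=0$ for all $i,j\in\mathbb{Z}$ by Theorem~\ref{5.8}. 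Choosing $m\ne n$ forces $\mu_k=0$ for every $k\in\mathbb{Z}$, so $\circ$ is trivial on $S$ as well.

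The main obstacle is precisely this Heisenberg-Virasoro case. The biderivation axioms alone (which correspond to the right post-Lie identity) leave a countable family of parameters $\mu_k$ alive; the essential extra ingredient is the left post-Lie identity, which couples $\circ$ on the Virasoro-part with $\circ$ on the $H$-generators, and turns the vanishing $L_i\circ H_j=0$ into the vanishing of all $\mu_k$.
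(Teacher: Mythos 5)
Your proposal is correct and follows essentially the same route as the paper: reduce to the symmetric super-biderivation classifications via Lemma~\ref{6.2}, dispose of the first four algebras immediately, and for the Heisenberg--Virasoro superalgebra kill the remaining parameters $\mu_k$ by applying the left post-Lie identity to a triple of $L$-generators (the paper uses the specific choice $L_2,L_1,L_3$ where you use general $L_m,L_n,L_p$ with $m\neq n$, which is the same computation).
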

  \begin{proof}
    It is clear for the super Virasoro algebra ${\rm SVir}$,  the super ${\rm W(2,2)}$ algebra ${\rm SW}$,
  the ${\rm N}=1$ super-${\rm BMS_3}$ algebra ${\mathcal B}$, the ${\rm N}=2$  superconformal algebra ${\frak N}$,  since any symmetric biderivation on these algebra is trivial.

For the Heisenberg-Virasoro superalgebra $S$,  suppose that $( S,[,],\circ)$ is a commutative post-Lie superalgebra. By Lemma \ref{6.2} and Theorem \ref{5.8}, we know that
$x\circ y=0$ for all $x, y\in S$ except that
 $L_{m}\circ L_{n}=\sum_{k\in \mathbb{Z}}\mu_{k}H_{m+n+k}$ for any $m, n\in\mathbb{Z}$, where $\mu_k\in\mathbb C$.

By
$  [L_2,L_1]\circ L_3=L_2\circ (L_1\circ L_3)-L_1\circ (L_2\circ L_3)$, it is easy to
see that the left-hand side of the above equation contains an item
$ \mu_{k}H_{6+k}\neq 0$, whereas the right-hand side is equal to zero, which is a contradiction. Thus, we have $\mu_{k}=0$ for any $k\in \mathbb{Z}$.
That is,
 $x\circ y=0$ for all $x, y \in S$.
\end{proof}

\vskip30pt \noindent{\bf Acknowledgments}
 This work is partially supported by the NNSF (Nos. 12071405, 11971315, 11871249), and is partially supported by Xinjiang Uygur Autonomous Region graduate scientific research innovation project (No. XJ2021G021).


\begin{thebibliography}{10}
\bibitem{DB}  Benkovi, D.:  Biderivations of triangular algebras.
\emph{ Linear Algebra Appl.}, \textbf{431},  1587--1602 (2009)



\bibitem{B3}  Bre$\check{\rm s}$ar, M.: On  generalized biderivations and related maps.
\emph{J. Algebra},  \textbf{172}, 764--786 (1995)


\bibitem{BZ} Bre$\check{\rm s}$ar, M.,  Zhao, K.: Biderivations and commuting linear maps on Lie algebras. \emph{J. Lie Theory}, \textbf{28},  885--900 (2018)



 \bibitem{CLL} Cai Y., Liu D., Lv R.: Classification of simple Harish-Chandra modules over the $N= 1$ Ramond algebra. \emph {J. Algebra}, {\bf 567}, 114-127 (2021)


 \bibitem{C} Chen, Z.: Biderivations and linear commuting maps on simple generalized Witt algebras over a field. \emph{ Elec J. Linear algebra}, \textbf{31}, 1--12 (2016)

\bibitem{C2} Cheng, X.,  Wang, M., Sun, J.,  Zhang, H.: Biderivations and linear commuting maps on the Lie algebra gca.  \emph{ Linear Multilinear Algebra}, \textbf{65},  1--11 (2017)

\bibitem{DGL} Dilxat M., Gao S.,  Liu D.: 2-Local superderivations on the super Virasoro algebra and the super W(2,2) algebra, \emph{Commun. Algebra}, \textbf{49}(12), 1--12  (2021)


\bibitem{F}  Fuks. D. B.: Cohomology of Infinite-Dimensional Lie Algebras, in Russian, Nauka, Moscow (1984); English transl., Plenum, New York (1986)



 \bibitem{FD} Fan, G.,  Dai, X.: Super-biderivations of Lie superalgebras.
 \emph{ Linear Multilinear Algebra}, \textbf{26},  58--66 (2017)


  \bibitem{GS} Gao, S., Jiang, C., Pei, Y.: Low-dimensional cohomology groups of the Lie algebras W(a,b). \emph{Commun. Algebra}, \textbf{ 39}, 397--423 (2011)

  \bibitem{GMP} Gao, S., Meng, Q., Pei, Y.: The first cohomology of $N = 1$ super Virasoro algebras. \emph{ Commun. Algebra}, \textbf{47(10)} 4230--4246 (2019)

 \bibitem{LPX1} Liu D., Pei Y., Xia L.: Simple restricted modules for Neveu Schwarz algebra. \emph{J. Algebra}, {\bf 546}, 341-356 (2020)

 \bibitem{LPX2} Liu D., Pei Y., Xia L.:  Whittaker modules for the super-Virasoro  algebras. \emph{J. Algebra Appl.}, {\bf 18}(11), 1950211 (2019)



\bibitem{S} Y. Su. {\it Classification of Harish-Chandra modules over the super-Virasoro algebras},  Commun. Alg.  23(10) (1995), 3653-3675.



\bibitem{TMC} Tang, X.,  Meng, L.,  Chen, L.: Super-biderivations and linear super-commuting maps on the Lie superalgebras. \emph{ Comm.Algebra}, DOL: 10.1080/00927872.2020.1778715

\bibitem{TXM} Tang, X.: Biderivations and commutative post-Lie algebra structures on the Lie algebra $\mathcal{W}(a,b)$. \emph{ Taiwanese Journal of Mathematics}, \textbf{22}, 1347--1366 (2018)



 \bibitem{TL} Tang, X.,   Li,  X.: Biderivations of the twisted Heisenberg-Virasoro algebra and their applications. \emph{Commun. Algebra}, \textbf{46}, 2346--2355 (2018)

\bibitem{XTB} Tang, X.: Biderivations, commuting maps and commutative post-Lie algebra structures on W-algebras.  \emph{Commun. Algebra}, \textbf{45}, 5252--5261 (2017)


 \bibitem{bvh} Vallette, B.: Homology of generalized partition posets.
   \emph{J. Pure Appl. Algebra}, \textbf{208}, 699--725 (2017)

\bibitem{WY}  Wang, D.,   Yu, X.: Biderivations and linear commuting maps on the Schr$\ddot{\rm o}$dinger-Virasoro Lie algebra. \emph{ Commun. Algebra}, \textbf{41}, 2166-2173 (2013)

\bibitem{WYC}  Wang, D., Yu, X.,  Chen, Z.: Biderivations of parabolic subalgebras of simple Lie algebras. \emph{ Commun. Algebra}, \textbf{39}, 4097-4104 (2011)



 \bibitem{XDC} Xia, et al.: Linear commuting maps and biderivations on the Lie algebra W(a,b). \emph{J. Lie theory},  \textbf{26}, 777--786 (2016)


   \bibitem{CCC} Xia, C., Wang, D., Han, X.:  Linear super-commuting maps and  super-biderivations on the super-Virasoro algebras.   \emph{  Commun. Algebra}, \textbf{44}, 5342-5350 (2016)

    \bibitem{ZFL} Zhang, J. H., Feng,  S.,  Li, H. X.,  Wu, R. H.: Generalized biderivations of nest algebras.  \emph{  Linear Algebra Appl.}, \textbf{418}, 225-233 (2006)
\end{thebibliography}
\end{document}